\pgfplotsset{compat=newest}
\date{}
\newtheorem{theorem}{Theorem}
\newtheorem{lemma}[theorem]{Lemma}
\newtheorem{cor}[theorem]{Corollary}
\newtheorem{prop}[theorem]{Proposition}
\newtheorem{remark}[theorem]{Remark}
\newtheorem{ass}[theorem]{Assumption}
\theoremstyle{definition} 
\newcommand{\<}{\langle{}}
\renewcommand{\>}{\rangle}
\newcommand{\ip}[2]{\llangle#1\hspace*{.5mm},#2\rrangle}
\newcommand{\dual}[2]{\<#1\hspace*{.5mm},#2\>}
\newcommand{\vdual}[2]{(#1\hspace*{.5mm},#2)}
\newcommand{\bc}[1]{\mathrm{b\!c}(#1)}
\newcommand{\G}[1]{{\Gamma_{\mathrm{#1}}}}
\newcommand{\barG}[1]{\bar\Gamma_{\mathrm{#1}}}
\newcommand{\diam}{\mathrm{diam}}
\newcommand{\wat}{\widehat}
\newcommand{\transp}{\mathsf{T}}
\newcommand{\Grad}{\boldsymbol{\nabla}}
\newcommand{\sGrad}{\boldsymbol{\varepsilon}}
\def\Div{{\rm\bf div\,}}
\def\grad{\nabla}
\def\GG{\boldsymbol{G}}
\def\MM{\boldsymbol{M}}
\def\SS{\boldsymbol{S}}
\def\NN{\boldsymbol{N}}
\def\II{\boldsymbol{I}}
\def\bq{\boldsymbol{q}}
\newcommand{\bL}{\ensuremath{\boldsymbol{L}}}
\newcommand{\LL}{\ensuremath{\mathbb{L}}}
\def\tM{\wat{M\!p}}
\def\tpsi{\wat{\psi\!\eta}}
\def\tphi{\wat{\phi\!\xi}}
\def\tN{\wat{N\!q}}
\newcommand{\bg}{\boldsymbol{g}}
\newcommand{\uu}{\mathfrak{u}}
\newcommand{\deltauu}{\delta\!\uu}
\newcommand{\vv}{\mathfrak{v}}
\newcommand{\VV}{\ensuremath{\mathfrak{V}}}
\newcommand{\brho}{\ensuremath{\boldsymbol{\rho}}}
\newcommand{\bpsi}{\ensuremath{\boldsymbol{\psi}}}
\newcommand{\bphi}{\ensuremath{\boldsymbol{\phi}}}
\newcommand{\bxi}{\ensuremath{\boldsymbol{\xi}}}
\newcommand{\bchi}{\ensuremath{\boldsymbol{\chi}}}
\newcommand{\deltar}{\delta\!r}
\newcommand{\deltau}{\delta\!u}
\newcommand{\deltachi}{\boldsymbol{\delta}\!\bchi}
\newcommand{\deltav}{\delta\!v}
\newcommand{\tracepsi}[1]{\mathrm{tr}_{#1}^\psi}
\newcommand{\traceM}[1]{\mathrm{tr}_{#1}^M}
\newcommand{\Hdiv}[1]{{\bH(\div\!,#1)}}
\newcommand{\Hrot}[1]{{\bH(\rot,#1)}}
\newcommand{\Hrotz}[1]{{\bH_0(\rot,#1)}}
\newcommand{\HDiv}[1]{{\HH(\Div\!,#1)}}
\newcommand{\HDivs}[1]{{\HH^s(\Div\!,#1)}}
\newcommand{\Hpsiz}{\ensuremath{\boldsymbol{H}^{\mathrm{\psi}}_0}}
\newcommand{\Hpsih}{\ensuremath{\boldsymbol{H}^{\mathrm{\psi}}_h}}
\newcommand{\HMz}{\ensuremath{\boldsymbol{H}^{\mathrm{M}}_0}}
\newcommand{\HMh}{\ensuremath{\boldsymbol{H}^{\mathrm{M}}_h}}
\newcommand{\HH}{\ensuremath{\boldsymbol{H}}}
\newcommand{\bH}{\ensuremath{\boldsymbol{H}}}
\DeclareMathOperator{\curl}{\mathbf{curl}}
\DeclareMathOperator{\rot}{rot}
\def\div{{\rm div\,}}
\newcommand{\ttt}{{\mathfrak{T}}}
\newcommand{\R}{\ensuremath{\mathbb{R}}}
\newcommand{\nn}{\ensuremath{\boldsymbol{n}}}
\newcommand{\cC}{\ensuremath{\mathcal{C}}}
\newcommand{\cCinv}{\ensuremath{\mathcal{C}^{-1}}}
\newcommand{\cT}{\ensuremath{\mathcal{T}}}
\newcommand{\cS}{\ensuremath{\mathcal{S}}}
\newcommand{\bt}{\ensuremath{\boldsymbol{t}}}
\newcommand{\eeta}{{\boldsymbol\eta}}
\title{A DPG method for Reissner--Mindlin plates
\thanks{Supported by ANID (formerly CONICYT) through FONDECYT projects 1190009, 1210391}}
\author{
Thomas~F\"uhrer$^\dagger$
\and
Norbert Heuer\thanks{
Facultad de Matem\'aticas, Pontificia Universidad Cat\'olica de Chile,
Avenida Vicu\~na Mackenna 4860, Santiago, Chile,
email: {\tt \{tofuhrer,nheuer\}@mat.uc.cl}}
\and
Antti H. Niemi\thanks{
Civil Engineering Research Unit, Faculty of Technology, University of Oulu,
Erkki Koiso-Kanttilan katu 5, 90570 Oulu, Finland,
email: {\tt antti.niemi@oulu.fi}}}
\begin{document}
\maketitle
\begin{abstract}
We present a discontinuous Petrov--Galerkin (DPG) method with optimal test functions
for the Reissner--Mindlin plate bending model. Our method is based on a variational
formulation that utilizes a Helmholtz decomposition of the shear force.
It produces approximations of the primitive variables and the bending moments.
For any canonical selection of boundary conditions the method converges quasi-optimally.
In the case of hard-clamped convex plates, we prove that the lowest-order scheme is locking free.
Several numerical experiments confirm our results.

\medskip \noindent
{\em AMS Subject Classification}:
74S05, 
35J35, 
65N30, 
35J67, 
74K20 
\end{abstract}

\section{Introduction}

We present and analyze a discontinuous Petrov--Galerkin method with optimal test functions
(DPG method) for the Reissner--Mindlin plate bending model.
The model considers the influence of transverse shear deformations in the strain-displacement relations
and may be viewed as a correction of the classical Kirchhoff--Love model.
Most prominently, the effective shear forces and concentrated corner forces,
that some scholars consider to be unnatural, are not needed in the Reissner--Mindlin model
to reduce the number of independent edge reactions. While for thin isotropic plates the results of
these models do not differ much, the more accurate kinematic assumptions of the Reissner--Mindlin
model lead also to practical improvement of results for thick and orthotropic plates.
On the other hand, principal advantages of the DPG framework are that it provides automatically
stable approximations for any conforming discretization space \cite{DemkowiczG_11_CDP,DemkowiczG_14_ODM},
and that adaptivity is a built-in feature, cf.~\cite{DemkowiczGN_12_CDP,CarstensenDG_14_PEC}.
For mechanical models of thin structures, the framework can be used to construct formulations
with the most relevant stress and displacement quantities of interest as primal variables. 
The underlying assumption is to have appropriate well-posed variational formulations
with product test spaces. The construction of such formulations and suitable approximation spaces
for the Reissner--Mindlin model is by no means trivial.

It should be noted that there is a long history of developing various other kinds of discretizations
for the Reissner--Mindlin model, and there are several strategies to tackle the numerical
phenomenon of transverse shear locking. Among them are, combined with mixed formulations,
Helmholtz decompositions of the shear force \cite{BrezziF_86_NAM,ArnoldF_89_UAF,BrambleS_98_NNL},
non-conforming approximations \cite{ArnoldF_89_UAF}, and reduced integration
\cite{BrezziBF_89_MIE,BrezziFS_91_EAM,ArnoldF_97_ALL}, to name a few classical results.
Some more recent approaches include special elements \cite{ArnoldBFM_07_LFR},
weakly over-penalized discontinuous Galerkin methods \cite{BoesingC_15_DGW,BoesingC_15_WOP}
and the DDR (discrete de Rham) complex method \cite{DiPietroD_DMR}.
DPG schemes allow for very general meshes, see~\cite{BacutaDMX_21_ANC} for a non-conforming setup,
similarly to virtual elements which have been developed for the Reissner--Mindlin model,
see \cite{BeiraodaVeigaMR_19_VES}. We also note that DPG schemes, being of minimum
residual type, are related with least squares approaches. Such methods have been
studied for a perturbed Stokes problem which behaves like the Reissner--Mindlin model,
see~\cite{CaiYZ_99_LSF,Cai_00_LSP}.
This list is far from being complete. For a more detailed discussion we refer to the recent
paper \cite{DiPietroD_DMR} on the DDR method.

In the present paper, we continue our development of DPG schemes for plate and shell structures
that started with the analysis of the Kirchhoff--Love plate model in \cite{FuehrerHN_19_UFK}.
Subsequently, we extended our techniques to the Reissner--Mindlin
model \cite{FuehrerHS_20_UFR} and shallow shells of Koiter type \cite{FuehrerHN_DMS}.
In both problems, Reissner--Mindlin plates and Koiter shells, different kinds of locking phenomena
appear and it must be stressed that DPG schemes are not automatically locking free.
In \cite{FuehrerHN_DMS} we dealt with the membrane locking by increasing the approximation order
of a trace variable. On the other hand, our previous Reissner--Mindlin plate study
\cite{FuehrerHS_20_UFR} focused on developing a variational formulation that converges to the
Kirchhoff--Love case \cite{FuehrerHN_19_UFK} when the plate thickness tends to zero.
Whether that approach is locking free depends on the construction of discrete trace spaces,
not considered in \cite{FuehrerHS_20_UFR}. Let us also mention the study \cite{CaloCN_14_ADP}
of a DPG scheme for Reissner--Mindlin plates, though their analysis considers the plate thickness
to be fixed.

Here, we complete the picture by utilizing a Helmholtz decomposition of the shear force
(after suitable scaling), as proposed by Brezzi and Fortin \cite{BrezziF_86_NAM} and thoroughly
analyzed by Arnold and Falk in \cite{ArnoldF_89_UAF}. In this way, a variational formulation
consists of three stages, the first to determine the irrotational component $\grad r$ of the
scaled shear force $\bq$, the second to determine the solenoidal component $\curl p$ of $\bq$
together with other variables, and the third to recover the vertical deflection $u$.
Stages one and three are simple Poisson problems which can be solved by standard finite elements,
whereas stage two reflects the very Reissner--Mindlin model and is solved by a DPG scheme.
Apart from variable $p$, it delivers the rotation field, $\bpsi$, and the bending moments, $\MM$,
along with the artificial variable $\eeta:=t\curl p$ where $t$ is the plate thickness.

Let us recall two important issues that determine the behavior of DPG approximations.
First, optimal test functions are a theoretical construct and have to be discretized
in practice. In order to maintain discrete stability, the existence of Fortin operators
has to be shown, cf.~\cite{GopalakrishnanQ_14_APD}. For our Kirchhoff--Love setting
from \cite{FuehrerHN_19_UFK}, this is done in \cite{FuehrerH_19_FDD}.
In the case of the Reissner--Mindlin model, the existence of Fortin operators remains open.
Second, in cases where the stability analysis requires Poincar\'e-type estimates,
norms in test spaces have to be properly scaled in order for DPG schemes to be
robust for larger domains. Otherwise approximations suffer
from long pre-asymptotic ranges of reduced convergence, a type of locking phenomenon.
For a detailed analysis we refer to \cite{FuehrerH_21_RDM}, and we note that
a scaling of norms is required for least-squares methods as well,
see~\cite[Section~3]{BringmannC_17_hAL}. In order to not complicate the presentation,
we restrain from providing all the details that are required to have a domain-robust approximation.
Instead, we collect all the needed changes in \S\ref{sec_large} and state the corresponding
error estimates without proof.

An overview of the remainder is as follows. In the next section we specify the model problem,
prescribe allowed boundary conditions, define some basic spaces, and present the
Helmholtz decomposition. Section~\ref{sec_traces} gives all the definitions and technical
results on spaces, norms, and trace operators. 
In Section~\ref{sec_DPG} we make use of the trace operators and spaces to derive our ultra-weak
formulation. We list the three variational stages and claim their stability
in Theorem~\ref{thm_stab} and Corollary~\ref{cor_stab}. Subsequently we present a discretization
of the three stages and state their $t$-robust quasi-optimal convergence in Theorem~\ref{thm_DPG}.
In the remainder of Section~\ref{sec_DPG} we provide proofs of the stated results.
In order to prove that our scheme is locking free, certain regularity results are needed.
They usually require convexity of the domain and hard-clamped boundary conditions.
In Section~\ref{sec_lf} we specify these assumptions, and recall from
\cite{ArnoldF_89_UAF} that they are satisfied for a special case.
We then state and prove a locking-free a priori error estimate
for a lowest-order scheme that uses canonical bases.
Finally, in Section~\ref{sec_num} we present several numerical experiments that illustrate
our theoretical results. In particular, they confirm that our scheme is locking free for
convex hard-clamped plates. It exhibits locking-free approximations also for other boundary
conditions and an example on a non-convex polygonal plate.

Throughout the paper, notation $a\lesssim b$ means that $a\le cb$ with an unspecified
generic constant $c>0$ that is independent of the (scaled) plate thickness $t$ and the
underlying mesh $\cT$. Notation $a\gtrsim b$ means that $b\lesssim a$,
and $a\simeq b$ indicates $a\lesssim b$ and $a\gtrsim b$.


\section{Model problem} \label{sec_model}

Let $\Omega\subset\R^2$ be a bounded simply connected Lipschitz domain
with boundary $\Gamma=\partial\Omega$. More specifically, for simplicity of the discrete analysis,
we assume that $\Omega$ is a polygon.
We are considering the Reissner--Mindlin plate bending model
with linearly elastic, homogeneous and isotropic material, described by the constitutive relations
\begin{align}
   \nonumber
   \bq &= \kappa Gt(\grad u-\bpsi),\\
   \label{RM1b}
   \MM &= -Dt^3[\nu\mathrm{tr}(\sGrad{\bpsi})\II+(1-\nu)\sGrad{\bpsi}]
\end{align}
and the equilibrium equations
\begin{align*}
   -\div\bq&=f,\\ \bq&=\Div\MM
\end{align*}
on $\Omega$. Here, $\Omega$ is the mid-surface of the plate with thickness $t>0$, $f$ the transversal
bending load, $u$ the transverse deflection, $\bpsi$ the rotation vector, $\bq$ the shear force vector,
$\MM$ the bending moment tensor, $\II$ the identity tensor,
and $\sGrad$ the symmetric gradient, $\sGrad{\bpsi}:=\frac 12(\grad\bpsi+(\grad\bpsi)^\transp)$.
Furthermore, $\nu\in (-1,1/2]$ is the Poisson ratio, $\kappa>0$ the shear correction factor, and
\[
   G=\frac E{2(1+\nu)},\quad D=\frac E{12(1-\nu^2)}
\]
with the Young modulus $E>0$. The operator $\div$ is the standard divergence, and
$\Div$ is the row-wise divergence when writing second-order tensors as $2\times 2$ matrix functions.

Relation \eqref{RM1b} between $\MM$ and $\bpsi$ can be written like
\[
   \MM=-t^3\cC\sGrad{\bpsi}
\]
where $\cC$ is the positive definite plane stress constitutive tensor that is independent of $t$.
It is an isomorphism within the space $\LL_2^s(\Omega)$ of symmetric $L_2(\Omega)$-tensors.
Since the dependence of the problem on $\kappa G$ is not critical, we simply select $\kappa G=1$.
Then, re-scaling $f\to t^3 f$, $\MM\to t^3\MM$ and $\bq\to t^3\bq$,
the Reissner--Mindlin model simplifies to the system
\begin{align} \label{RM3}
   -\div\bq = f,\quad
   \MM+\cC\sGrad{\bpsi} = 0,\quad
   \Div\MM-\bq =0,\quad
   \bq = t^{-2}(\grad u - \bpsi).
\end{align}
Canonical (homogeneous) boundary conditions are given on mutually exclusive subsets
$\G{hc}$, $\G{sc}$, $\G{hss}$, $\G{sss}$, $\G{f}$ of $\Gamma$ (for simplicity we
assume that they are connected; they can be empty and the closure of their union equals $\Gamma$),
\begin{alignat*}{2}
   \text{hard clamped (hc):}\quad & \bpsi=0,\ u=0
      &&\text{on}\ \G{hc},\\
   \text{soft clamped (sc):}\quad & \bpsi\cdot\nn = \bt\cdot\MM\nn = u =0 \qquad
      &&\text{on}\ \G{sc},\\
   \text{hard simple support (hss):}\quad & \nn\cdot\MM\nn = \bpsi\cdot\bt = u = 0
      &&\text{on}\ \G{hss},\\
   \text{soft simple support (sss):}\quad & \MM\nn = 0,\ u = 0
      &&\text{on}\ \G{sss},\\
   \text{free (f):}\quad                  & \MM\nn = 0,\ \bq\cdot\nn = 0
      &&\text{on}\ \G{f}.
\end{alignat*}
Here, $\nn$ and $\bt$ are the unit exterior normal and tangential vectors along $\Gamma$, respectively.

In the following we use the notation $\G{u}:=\Gamma\setminus\barG{f}$ for the part of the boundary
with imposed zero deflection.
Throughout, our assumptions are that $\G{u}$ has positive measure and that the boundary condition
for rotation $\bpsi$ (referred to as ``$\bc{\psi}$'') eliminates ``rigid rotations''
so that the Korn inequality $\|\bpsi\| \lesssim \|\sGrad\bpsi\|$ holds.
Here, $\|\cdot\|$ denotes the $L_2(\Omega)$-norm, generically for scalar, vector and tensor-valued
functions. In the case of $\G{f}=\emptyset$ we will need the quotient space
$L_2(\Omega)/\R$ with quotient norm. Also, some scaling will be different for different boundary
conditions. For brevity we will write
\begin{equation} \label{star}
\begin{aligned}
   &\text{if}\ \G{f}=\emptyset:
   && L_2^*(\Omega):=L_2(\Omega)/\R,\quad && H^1_*(\Omega) := H^1(\Omega)/\R,\quad
   && \|\cdot\|_*:=\inf_{c\in\R}\|\cdot-c\|,\\
   &\text{if}\ \G{f}\not=\emptyset:
   && L_2^*(\Omega):=L_2(\Omega),\quad    && H^1_*(\Omega) := H^1(\Omega),\quad
   && \|\cdot\|_*:=\|\cdot\|,\\
   &\text{if}\ \G{sc}=\G{sss}=\emptyset:\quad && t_*:=1,\\
   &\text{if}\ \G{sc}\cup\G{sss}\not=\emptyset: && t_*:=t.
\end{aligned}
\end{equation}
Here, $H^1(\Omega)$ is the standard Sobolev space.
We note that parameter $t_*$ will appear as a weighting factor in the test norm and determines the
weighting $t_*^{-1}$ of one of the unknown trace terms. Now, in case of boundary conditions
implying $\G{sc}=\G{sss}=\emptyset$, assignment $t_*=1$ implies that weight $t_*^{-1}$ on the ansatz
side does not blow up when $t\to 0$. This will be important when proving that our scheme is
locking free. Whether the selection of $t_*=1$ is possible will become clear in the stability
analysis of the adjoint problem, cf.~\S\ref{sec_adj} below.

In order to formulate the needed Korn and Poincar\'e inequalities we introduce some
spaces with boundary conditions.
We denote $\bH^1(\Omega):=\bigl(H^1(\Omega)\bigr)^2$ and define
\begin{subequations} \label{H1_psi_p}
\begin{align}
   \bH^1_\psi(\Omega) &:= \{\bchi\in\bH^1(\Omega);\; \bchi\ \text{satisfies}\ \bc{\psi}\},\\
   H^1_u(\Omega) &:= \{v\in H^1(\Omega);\; v=0 \text{ on } \Gamma\setminus\barG{f}\},\\
   H^1_p(\Omega) &:= \{v\in H^1_*(\Omega);\; v|_\G{f}=0\}.
\end{align}
\end{subequations}
Notation $H^1_p(\Omega)$ stems from a variable $p$, later introduced through a Helmholtz
decomposition, with boundary condition complementary to that of $u$.
Then we will use the Korn and Poincar\'e inequalities
\begin{align} \label{Korn}
   \|\bchi\| + \|\Grad\bchi\| \lesssim \|\sGrad\bchi\| \ \forall\bchi\in \bH^1_\psi(\Omega),
\end{align}
\begin{align} \label{Poincare}
   \|v\|_* \lesssim \|\grad v\| \ \forall v\in H^1_p(\Omega).
\end{align}

\begin{remark} \label{rem_d}
Inequalities \eqref{Korn}, \eqref{Poincare} are valid for a fixed domain. The hidden constants
depend on the size of $\Omega$.
Since these bounds influence the selection of test norms for our DPG scheme,
it is critical to include scaling parameters when considering larger, even moderately sized,
domains. We refer to \cite{FuehrerH_21_RDM} for a detailed discussion.
For ease of presentation, our analysis does not consider such a scaling, i.e., we assume
$\diam(\Omega)$ to be the length unit. In Section~\ref{sec_large}, we specify
the changes that are required in order to have a DPG scheme that is robust with respect
to the diameter of $\Omega$.
\end{remark}

\subsection{Helmholtz decomposition}

Let us introduce the operators
\begin{align*}
   &\curl z:=(\partial_y z,-\partial_x z)^T,\qquad\
   \rot \bq := \partial_x q_y - \partial_y q_x\quad\text{for}\ \bq=(q_x,q_y)^T
   \qquad\quad\text{(formal adjoints)}.
\end{align*}
Following \cite{BrezziF_86_NAM,ArnoldF_89_UAF}, we use a Helmholtz decomposition
of $\bq=t^{-2}(\grad u-\bpsi)$. Defining
\begin{align} \label{prob_r}
   r\in H^1_u(\Omega):\quad
   \vdual{\grad r}{\grad\deltar} = \vdual{f}{\deltar}
   \quad\forall\deltar\in H^1_u(\Omega)
\end{align}
(with $L_2(\Omega)$-duality $\vdual{\cdot}{\cdot}$)
it follows that $\div(\bq-\grad r)=0$ so that
\begin{align} \label{dec}
   \bq = t^{-2}(\grad u-\bpsi) = \grad r + \curl p
\end{align}
with $p\in H^1(\Omega)$ satisfying
\begin{align} \label{prob_p}
   \rot\bigl(t^2\curl p+\bpsi) = 0\quad\text{in}\ \Omega.
\end{align}
By definition \eqref{prob_r} of $r$ we have that $r=0$ on $\G{u}$ and $\nn\cdot\grad r=0$ on $\G{f}$.
Therefore, since $\bq\cdot\nn=0$ on $\G{f}$ and $\bq-\grad r=\curl p$, it follows that
$\nn\cdot\curl p=0$ on $\G{f}$, $p|_\G{f}$ is constant (we select $0$). Furthermore,
since $\bt\cdot\grad r=\bt\cdot\grad u=0$ on $\G{u}$ and $\grad u-t^2\grad r=t^2\curl p+\bpsi$,
we obtain the following boundary conditions for $p$:
\begin{align*}
   p=0\quad\text{on}\quad\G{f},\quad
   & \bt\cdot(t^2\curl p+\bpsi)=0\quad\text{on}\quad \G{u}.
\end{align*}
Now, introducing $\eeta:=t\curl p$, substituting $\bq$ by decomposition \eqref{dec},
system \eqref{RM3} becomes

\begin{subequations} \label{prob}
\begin{alignat}{2}
    \Div\MM - \curl p &= \grad r  \label{p1},\\
    \MM + \cC\sGrad\bpsi &= 0     \label{p2},\\
    \rot(t\eeta+\bpsi) &= 0       \label{p3},\\
    t\curl p-\eeta    &= 0        \label{p4}
\end{alignat}
\end{subequations}
in $\Omega$, and $u$ is determined by
\begin{align} \label{prob_u}
   u\in H^1_u(\Omega):\quad
   \vdual{\grad u}{\grad\deltau} = t^2\vdual{f}{\deltau} + \vdual{\bpsi}{\grad\deltau}
   \quad\forall\deltau\in H^1_u(\Omega).
\end{align}
In the following we derive an ultraweak formulation for problem \eqref{prob},
considering different boundary conditions.

\section{Spaces and trace operators} \label{sec_spaces}

In the following let $\cT=\{T\}$ denote a mesh of pairwise disjoint Lipschitz elements $T$ covering
$\Omega$, $\bar\Omega=\cup_{T\in\cT}\bar T$.
For a Lipschitz domain $\omega\subset\Omega$ we use the Lebesgue spaces of scalar,
vector and $2\times 2$ tensor fields
$L_2(\omega)$, $\bL_2(\omega)$ and $\LL_2(\omega)$, with generic norm $\|\cdot\|_\omega$,
denote by $\LL_2^s(\omega)$ the space of symmetric tensors, and define the Sobolev spaces
\[
   \Hrot{\omega}:=\{\brho\in\bL_2(\omega);\; \rot\brho\in L_2(\omega)\},
\]
\[
   \HDiv{\omega}:=\{\SS\in\LL_2(\omega);\; \Div\SS\in\bL_2(\omega)\},\quad
   \HDivs{\omega}:=\HDiv{\omega}\cap\LL_2^s(\omega).
\]
Corresponding product spaces with respect to $\cT$ are denoted analogously, replacing $\omega$
with $\cT$. For instance, $\Hrot{\cT}:=\Pi_{T\in\cT} \Hrot{T}$ with product norm
\[
   \bigl(\|\brho\|^2 + \|\rot\brho\|_\cT^2\bigr)^{1/2}\quad\text{where}\quad
   \|\rot\brho\|_\cT^2 := \sum_{T\in\cT} \|\rot\brho\|_T^2\quad \bigl(\brho\in\Hrot{\cT}\bigr).
\]
Furthermore, we need the spaces
\begin{subequations} \label{V}
\begin{align}
   &V_1(\cT) := \bH^1(\cT)\times \Hrot{\cT},\\
   &V_2(\cT) := \HDiv{\cT}\times H^1_*(\cT),\quad
    V_2^s(\cT) := \HDivs{\cT}\times H^1_*(\cT),\\
   &V(\cT) := V_1(\cT)\times V_2^s(\cT).
\end{align}
\end{subequations}
with $H^1_*(\cT):=H^1(\cT)/\R$ when $\G{f}=\emptyset$ and $H^1_*(\cT):=H^1(\cT)$ otherwise, and norms
\begin{align*}
   \|(\bchi,\brho)\|_{V_1(\cT,t)}^2 &:=
   \|\bchi\|^2 + \|\Grad\bchi\|_\cT^2 + \|\brho\|^2 + t^{-2} \|\rot(t\brho+\bchi)\|_\cT^2,\\
   \|(\SS,v)\|_{V_2(\cT,t)}^2 &:=
   \|\SS\|^2 + t_*^2 \|v\|_*^2 + \|\Div\SS-\curl v\|_\cT^2 + t^2 \|\curl v\|_\cT^2,\\
   \|(\bchi,\brho,\SS,v)\|_{V(\cT,t)}^2 &:= \|(\bchi,\brho)\|_{V_1(\cT,t)}^2 + \|(\SS,v)\|_{V_2(\cT,t)}^2
\end{align*}
for $(\bchi,\brho)\in V_1(\cT)$ and $(\SS,v)\in V_2(\cT)$. For notation $t_*$ and $\|\cdot\|_*$,
recall definition \eqref{star}.

In our variational setting, $V(\cT)=V_1(\cT)\times V_2^s(\cT)$
will be the test space with symmetric tensors $\SS\in\HDivs{\cT}$
whereas the larger space $V_2(\cT)$ will be needed to define traces of $\MM\in\HDiv{\Omega}$
without symmetry condition. Specifically, our trace variables will come from the trial spaces
\begin{align} \label{U_cont}
   U_1 := \bH^1(\Omega)\times \Hrot{\Omega}\subset V_1(\cT),\quad
   U_2 := \HDiv{\Omega}\times H^1_*(\Omega)\subset V_2(\cT) 
\end{align}
with norms
\begin{align*}
   \|(\bpsi,\eeta)\|_{U_1(t)}^2 &:=
   \|\bpsi\|^2 + \|\sGrad\bpsi\|^2 + \|\eeta\|^2 + t_*^{-2} \|\rot(t\eeta+\bpsi)\|^2,\\
   \|(\MM,p)\|_{U_2(t)}^2 &:=
   \|\MM\|^2 + t^2 \|p\|_*^2 + \|\Div\MM-\curl p\|^2 + t^2 \|\curl p\|^2 
\end{align*}
for $(\bpsi,\eeta)\in U_1$ and $(\MM,p)\in U_2$. Note the subtle difference in the norms
$\|\Grad\bchi\|_\cT$ with full gradient on the test side and $\|\sGrad\bpsi\|$ with symmetric part
of the gradient on the trial side. Of course,
\[
   \|(\bpsi,\eeta)\|_{U_1(t)}^2
   \simeq
   \|\bpsi\|^2 + \|\Grad\bpsi\|^2 + \|\eeta\|^2 + t^2 \|\rot\eeta\|^2
   \quad\text{if}\quad t_*=1
\]
uniformly in $t>0$.
In \eqref{U_cont} we have identified functions of $\cT$-product spaces
with their counterparts on $\Omega$ defined in the piecewise sense, and will continue to do so.
Taking the boundary conditions into account (recall definition \eqref{H1_psi_p}) the spaces become
\begin{subequations} \label{U0}
\begin{align}
   \label{U01}
   U_{1,0}(t) &:= \{(\bpsi,\eeta)\in \bH^1_{\psi}(\Omega)\times \Hrot{\Omega};\;
                    \bt\cdot(t\eeta+\bpsi)|_\G{u}=0\},\\
   \label{U02}
   U_{2,0}(t) &:= \{(\MM,p)\in \HDiv{\Omega}\times H^1_p(\Omega);\;
                    \bt\cdot\MM\nn|_\G{sc}= \nn\cdot\MM\nn|_\G{hss}=0,\
                    \MM\nn|_{\G{sss}\cup\G{f}}=0 \}. 
\end{align}
\end{subequations}

\subsection{Traces} \label{sec_traces}

We define trace operators
\[
   \tracepsi{}:\;  V_1(\cT)\to V_2^s(\cT)',\qquad
   \traceM{}:\;  V_2(\cT)\to V_1(\cT)'
\]
by
\begin{align*}
   \dual{\tracepsi{}(\bchi,\brho)}{(\SS,v)}_\cS
   &:=
   \vdual{\bchi}{\Div\SS-\curl v}_\cT - \vdual{\brho}{t\curl v}_\cT
   +\vdual{\Grad\bchi}{\SS}_\cT + \vdual{\rot(t\brho+\bchi)}{v}_\cT,
   \\
   \dual{\traceM{}(\MM,p)}{(\bchi,\brho)}_\cS
   &:= \dual{\tracepsi{}(\bchi,\brho)}{(\MM,p)}_\cS
\end{align*}
for $(\bchi,\brho)\in V_1(\cT)$, $(\SS,v)\in V_2^s(\cT)$, and $(\MM,p)\in V_2(\cT)$.
Here, $\vdual{\cdot}{\cdot}_\cT$ denotes the generic $L_2(\cT)$-duality for scalar, vector and
tensor fields. That is, appearing differential operators are taken piecewise on $\cT$.
Of course, for symmetric $\SS$, $\vdual{\Grad\bchi}{\SS}_\cT=\vdual{\sGrad\bchi}{\SS}_\cT$ in the
definition of operator $\tracepsi{}$.

Now, the restriction of $\tracepsi{}$ and $\traceM{}$ to the corresponding spaces of continuous
functions gives rise to the trace spaces
\[
   \Hpsiz(\cS,t) := \tracepsi{}(U_{1,0}(t)),\quad
   \HMz(\cS,t) := \traceM{}(U_{2,0}(t)) 
\]
with norms
\begin{subequations} \label{norms_trace}
\begin{align}
   \|\tpsi\|_{\psi,t} &:= \inf\{\|(\bpsi,\eeta)\|_{U_1(t)};\; \tracepsi{}((\bpsi,\eeta)) = \tpsi\}
   &&\bigl(\tpsi\in \Hpsiz(\cS,t)\bigr),\\
   \|\tM\|_{M,t} &:= \inf\{\|(\MM,p)\|_{U_2(t)};\; \traceM{}((\MM,p)) = \tM\}
   &&\bigl(\tM\in \HMz(\cS,t)\bigr).
\end{align}
\end{subequations}
The following statements relate canonical trace norms with their counterparts by duality.

\begin{lemma} \label{la_norms}
\begin{align*}
   \|\tpsi\|_{\psi,t}
   &= \sup_{0\not=(\SS,v)\in V_2^s(\cT)} \frac {\dual{\tpsi}{(\SS,v)}_\cS}{\|(\SS,v)\|_{V_2(\cT,t)}}
   \quad \bigl(\tpsi\in \Hpsiz(\cS,t)\bigr),\\
   \|\tM\|_{M,t}
   &= \sup_{0\not=(\bchi,\brho)\in V_1(\cT)}
      \frac {\dual{\tM}{(\bchi,\brho)}_\cS}{\|(\bchi,\brho)\|_{V_1(\cT,t)}}
   \quad \bigl(\tM\in \HMz(\cS,t)\bigr). 
\end{align*}
Here, the dualities between the trace spaces and corresponding product test spaces
are defined in the canonical way, to be consistent with the definition of the trace operators.
\end{lemma}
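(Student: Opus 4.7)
The plan is to prove both inequalities in each identity. By the symmetry built into the definition, namely $\dual{\traceM{}(\MM,p)}{(\bchi,\brho)}_\cS=\dual{\tracepsi{}(\bchi,\brho)}{(\MM,p)}_\cS$, the proof for $\traceM{}$ reduces to the proof for $\tracepsi{}$ after swapping the roles of $U_{1,0}(t)\leftrightarrow U_{2,0}(t)$ and of $V_1(\cT)\leftrightarrow V_2^s(\cT)$; I therefore only outline the proof for $\tracepsi{}$.

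The upper bound $\sup\le\inf$ is the easy direction. For any extension $(\bpsi,\eeta)\in U_{1,0}(t)$ of $\tpsi$ and any $(\SS,v)\in V_2^s(\cT)$, I would expand $\dual{\tpsi}{(\SS,v)}_\cS$ from the definition of $\tracepsi{}$, use the symmetry of $\SS$ to rewrite $\vdual{\Grad\bpsi}{\SS}_\cT=\vdual{\sGrad\bpsi}{\SS}_\cT$, and apply Cauchy-Schwarz term by term, pairing $\bpsi$ with $\Div\SS-\curl v$, $\eeta$ with $t\,\curl v$, $\sGrad\bpsi$ with $\SS$, and $t_*^{-1}\rot(t\eeta+\bpsi)$ with $t_*v$. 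A discrete Cauchy-Schwarz in $\R^4$ then yields $|\dual{\tpsi}{(\SS,v)}_\cS|\le\|(\bpsi,\eeta)\|_{U_1(t)}\|(\SS,v)\|_{V_2(\cT,t)}$; dividing, taking the supremum in $(\SS,v)$, and then the infimum over extensions completes this direction. (The case $\G{f}=\emptyset$, where $v$ lives in a quotient space, is compatible with this estimate since $\int_\Omega\rot(t\eeta+\bpsi)=0$ for conforming $(\bpsi,\eeta)\in U_{1,0}(t)$ by the trace condition $\bt\cdot(t\eeta+\bpsi)|_{\G{u}}=0$ with $\G{u}=\Gamma$.)

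For the reverse inequality $\inf\le\sup$, I would argue by Hilbert-space duality. Let $N$ denote the supremum on the right; $N$ is precisely the norm of $\tpsi$ viewed as a continuous linear functional on the Hilbert space $(V_2^s(\cT),\|\cdot\|_{V_2(\cT,t)})$. The infimum defining $\|\tpsi\|_{\psi,t}$ is the squared-norm-minimizing projection onto the affine set of extensions inside $U_{1,0}(t)$; its value equals the quotient norm on $U_{1,0}(t)/\ker(\tracepsi{})$. The task therefore reduces to showing that $\tracepsi{}$, restricted to $U_{1,0}(t)$, induces an isometric isomorphism of $U_{1,0}(t)/\ker(\tracepsi{})$ onto its image in $V_2^s(\cT)'$ equipped with the dual norm. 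This would be established via the closed-range theorem and the Hilbert-space saddle-point formulation of the constrained minimization $\min\|(\bpsi,\eeta)\|_{U_1(t)}$ subject to $\tracepsi{}(\bpsi,\eeta)=\tpsi$, whose strong duality pairs the primal value $\tfrac12\|\tpsi\|_{\psi,t}^2$ with a dual problem whose optimum is $\tfrac12N^2$.

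The delicate point — and the main obstacle — is precisely this isometry property. The Cauchy-Schwarz equality case in the upper-bound step formally dictates $\SS^*=\sGrad\bpsi^*$, $v^*=t_*^{-2}\rot(t\eeta^*+\bpsi^*)$, $\Div\SS^*-\curl v^*=\bpsi^*$, and $t\,\curl v^*=-\eeta^*$ for a minimal extension $(\bpsi^*,\eeta^*)$. These four requirements are overdetermined and cannot be satisfied by a direct construction for generic extensions: the additional regularity needed to place $(\SS^*,v^*)\in V_2^s(\cT)$ comes only from the Euler-Lagrange equations for the minimizer. The cleanest route, which I would take, is therefore the abstract Hilbert-space argument above, which avoids any explicit construction of $(\SS^*,v^*)$ and makes the matching of norms follow from the careful tuning of $\|\cdot\|_{U_1(t)}$ and $\|\cdot\|_{V_2(\cT,t)}$ against each pairing in $\dual{\tpsi}{(\SS,v)}_\cS$.
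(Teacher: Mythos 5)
The easy direction ($\sup\le\inf$) is handled correctly, including the quotient-space subtlety for $\G{f}=\emptyset$, and the reduction of the $\traceM{}$ case to the $\tracepsi{}$ case by the built-in symmetry is fine. The paper itself simply cites \cite[Lemma~A.10]{CarstensenDG_16_BSF} and \cite[Proof of Lemma~4]{FuehrerHS_20_UFR}; in those references the hard direction is established by the explicit construction you wrote down and then set aside: one takes the minimal-norm extension $(\bpsi^*,\eeta^*)$, defines $\SS^*:=\sGrad\bpsi^*$ and $v^*:=t_*^{-2}\rot(t\eeta^*+\bpsi^*)$, and uses the Euler--Lagrange equations of the minimizer (tested with compactly supported $(\delta\bpsi,\delta\eeta)$ on each element) to verify that $\Div\SS^*-\curl v^*=\bpsi^*$ and $t\curl v^*=-\eeta^*$ hold element-wise, so $(\SS^*,v^*)\in V_2^s(\cT)$ and Cauchy--Schwarz equality is attained.

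The gap is precisely where you declare the abstract route to be sufficient. Lagrangian/Fenchel strong duality for $\min\{\tfrac12\|(\bpsi,\eeta)\|_{U_1(t)}^2:\tracepsi{}(\bpsi,\eeta)=\tpsi\}$ yields a dual problem whose penalty term is $\tfrac12\|T^*(\SS,v)\|_{U_1(t)'}^2$ with $T=\tracepsi{}|_{U_{1,0}(t)}$, \emph{not} $\tfrac12\|(\SS,v)\|_{V_2(\cT,t)}^2$. Cauchy--Schwarz gives $\|T^*(\SS,v)\|_{U_1(t)'}\le\|(\SS,v)\|_{V_2(\cT,t)}$, so strong duality delivers only $\|\tpsi\|_{\psi,t}\ge N$, which is again the easy direction. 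The missing inequality $\|\tpsi\|_{\psi,t}\le N$ requires the reverse bound $\|T^*(\SS,v)\|_{U_1(t)'}\ge\|(\SS,v)\|_{V_2(\cT,t)}$ at least at the dual optimizer, i.e.\ that a Cauchy--Schwarz-saturating $(\bpsi,\eeta)\in U_{1,0}(t)$ exists for that $(\SS,v)$. That existence is not a consequence of the closed range theorem; it is exactly the constructive step you labelled ``overdetermined'' and tried to route around. The four equality conditions are not overdetermined for the \emph{minimizer} -- the two conditions that do not serve as the definition of $(\SS^*,v^*)$ are the Euler--Lagrange equations of $(\bpsi^*,\eeta^*)$ in disguise. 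As it stands, your sketch proves $N\le\|\tpsi\|_{\psi,t}$ twice and leaves the converse unproved.
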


\begin{proof}
These statements follow from standard arguments, see~\cite[Lemma~A.10]{CarstensenDG_16_BSF}
for the first setting, and \cite[Proof of Lemma~4]{FuehrerHS_20_UFR} for a framework that applies
to our variational formulation. In \cite[Proof of Proposition~3]{FuehrerHN_DMS}) this
framework is recalled more briefly.
\end{proof}

\begin{remark} \label{rem_traces}
We note that for sufficiently smooth functions $(\bpsi,\eeta), (\bchi,\brho) \in U_1$,
$(\MM,p), (\SS,v)\in U_2$ the traces reduce to a mixture of classical traces on $\G{}$ of scalar functions,
tangential traces of vector functions, and normal traces of tensors
(with $L_2(\G{})$-bilinear form $\dual{\cdot}{\cdot}_\G{}$):
\begin{align*}
   \dual{\tracepsi{}(\bpsi,\eeta)}{(\SS,v)}_\cS
   &=
   \dual{\bpsi}{\SS\nn+v\bt}_\G{} + t \dual{\eeta\cdot\bt}{v}_\G{}
   =
   \dual{\bpsi}{\SS\nn}_\G{} + \dual{(t\eeta+\bpsi)\cdot\bt}{v}_\G{},\\
   \dual{\traceM{}(\MM,p)}{(\bchi,\brho)}_\cS
   &=
   \dual{\MM\nn+p\bt}{\bchi}_\G{} + t \dual{p}{\brho\cdot\bt}_\G{}
   =
   \dual{\MM\nn}{\bchi}_\G{} + \dual{p}{(t\brho+\bchi)\cdot\bt}_\G{}.
\end{align*}
For general functions of the above spaces, these are dualities between Sobolev spaces of
orders $\pm 1/2$ on $\G{}$. Furthermore, for test functions of product spaces,
$(\SS,v)\in V_2^s(\cT)$ and $(\bchi,\brho)\in V_1(\cT)$, traces $\tracepsi{}(\bpsi,\eeta)$
and $\traceM{}(\MM,p)$ live on the skeleton $\cS=\{\partial T;\; T\in\cT\}$.
\end{remark}

\begin{lemma} \label{la_cont}
Let $(\bchi,\brho,\SS,v)\in V(\cT)$ and $t>0$ be given. The equivalences
\begin{align*}
   (\bchi,\brho)\in U_{1,0}(t) &\quad\Leftrightarrow\quad
   \dual{\traceM{}(\MM,p)}{(\bchi,\brho)}_\cS = 0\quad \forall (\MM,p)\in U_{2,0}(t),\\
   (\SS,v)\in U_{2,0}(t) &\quad\Leftrightarrow\quad
   \dual{\tracepsi{}(\bpsi,\eeta)}{(\SS,v)}_\cS = 0\quad \forall (\bpsi,\eeta)\in U_{1,0}(t)
\end{align*}
hold true. 
\end{lemma}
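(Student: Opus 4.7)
The plan is to base both equivalences on a single element-wise integration-by-parts identity. For sufficiently regular arguments, the IBP relations
\[
  \vdual{\bchi}{\Div\SS}_T + \vdual{\Grad\bchi}{\SS}_T = \dual{\bchi}{\SS\nn_T}_{\partial T}
\]
and
\[
  \vdual{\rot(t\brho+\bchi)}{v}_T - \vdual{t\brho+\bchi}{\curl v}_T = \dual{(t\brho+\bchi)\cdot\bt_T}{v}_{\partial T}
\]
turn the definition of $\tracepsi{}$ into
\[
   \dual{\tracepsi{}(\bchi,\brho)}{(\SS,v)}_\cS
   = \sum_{T\in\cT}\bigl[\dual{\bchi}{\SS\nn_T}_{\partial T}
     + \dual{(t\brho+\bchi)\cdot\bt_T}{v}_{\partial T}\bigr],
\]
and an analogous identity holds for $\traceM{}$ by the symmetric definition. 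Both sides extend by density to the broken spaces with the boundary dualities interpreted as standard Sobolev trace pairings.

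The ``$\Rightarrow$'' directions are then a direct verification. When the trial factor is globally conforming, the element sum telescopes to a boundary duality on $\Gamma$. Splitting $\Gamma = \G{hc}\cup\G{sc}\cup\G{hss}\cup\G{sss}\cup\G{f}$, the condition $\bt\cdot(t\eeta+\bpsi) = 0$ on $\G{u}$ kills the second integrand on $\G{hc}\cup\G{sc}\cup\G{hss}\cup\G{sss}$, while the first integrand is annihilated portion by portion ($\bpsi = 0$ on $\G{hc}$; $\bpsi\cdot\nn = 0$ paired with $\bt\cdot\MM\nn = 0$ on $\G{sc}$; $\bpsi\cdot\bt = 0$ paired with $\nn\cdot\MM\nn = 0$ on $\G{hss}$; $\MM\nn = 0$ on $\G{sss}$); finally, on $\G{f}$ both $\MM\nn = 0$ and $p = 0$ (resp.\ $\SS\nn = 0$ and $v = 0$) annihilate both integrands.

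For the ``$\Leftarrow$'' direction of the second equivalence I would first localize. Testing with $(0,\eeta)$ for $\eeta\in C^\infty_c(\Omega)$ collapses the pairing to a sum over interior edges of $t\dual{\eeta\cdot\bt}{\jump{v}}_e$; varying $\eeta$ forces $\jump{v} = 0$ on every interior edge, so $v\in H^1_*(\Omega)$ globally. Testing next with $(\bpsi,0)$ for $\bpsi\in C^\infty_c(\Omega)$ and using the continuity of $v$ yields $\jump{\SS\nn} = 0$, hence $\SS\in\HDivs{\Omega}$. With global conformity in hand, the identity reduces to $\dual{\bpsi}{\SS\nn}_\Gamma + \dual{(t\eeta+\bpsi)\cdot\bt}{v}_\Gamma = 0$ for all admissible $(\bpsi,\eeta)$. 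For each of the five boundary portions I would then produce, via trace lifting in $\bH^1_\psi(\Omega)$ and $\Hrot{\Omega}$ combined with cutoffs away from the transitions, a family of $(\bpsi,\eeta)\in U_{1,0}(t)$ whose traces are localized on the targeted portion and prescribe arbitrary admissible data there. Each such family extracts precisely one defining condition of $U_{2,0}(t)$. The argument for the first equivalence is the mirror image: first test with $(\MM,0)$, $\MM\in C^\infty_c(\Omega)$ symmetric (the three free parameters suffice to realize every $\MM\nn$ on a chosen edge), then with $(0,p)$, $p\in C^\infty_c(\Omega)$, using the continuity of $\bchi$ already obtained.

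The main obstacle is the final step: producing, for each boundary portion, admissible test functions that respect the coupled BC $\bt\cdot(t\eeta+\bpsi)|_{\G{u}} = 0$ together with the BCs on all other portions, while leaving the trace one wishes to vary free on the targeted portion. This is a standard application of Sobolev trace theory combined with cutoff functions, but must be carried out carefully so that no accidental constraint is imposed on the extracted trace — otherwise one would not recover the full strength of the defining conditions of $U_{2,0}(t)$ and $U_{1,0}(t)$.
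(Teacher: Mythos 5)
Your proposal is correct and follows essentially the same structure as the paper's proof: the ``$\Rightarrow$'' direction reduces to checking that the telescoped boundary dualities of Remark~\ref{rem_traces} vanish portion by portion, and the ``$\Leftarrow$'' direction first establishes global conformity by testing with one component at a time (exactly the paper's choices $p=0$ then $\MM=0$, resp.\ $\bpsi=0$ then $\eeta=0$) and then recovers the boundary conditions via the boundary dualities. The only inessential deviation is your self-imposed restriction to \emph{symmetric} $\MM\in C^\infty_c(\Omega)$ for the first equivalence: $U_{2,0}(t)$ allows non-symmetric tensors, so you could simply drop the symmetry constraint and avoid the ``three free parameters'' argument, though as you observe symmetric fields already realize arbitrary normal traces and the step goes through.
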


\begin{proof}
In both cases, direction ``$\Rightarrow$'' can be seen by following Remark~\ref{rem_traces}
and observing that all the dualities on $\Gamma$ vanish due to the boundary conditions
of spaces $U_{1,0}(t)$ and $U_{2,0}(t)$. To show direction ``$\Leftarrow$'' in the first statement,
let $(\bchi,\brho)\in V_1(\cT)$ be given with
$\dual{\traceM{}(\MM,p)}{(\bchi,\brho)}_\cS = 0$ for any $(\MM,p)\in U_{2,0}(t)$.
First, selecting $p=0$, this gives
\[
  0 = \vdual{\bchi}{\Div\MM} +\vdual{\Grad\bchi}{\MM}_\cT\quad\forall \MM\in\HDiv{\Omega}
  \quad \text{``plus boundary condition''},
\]
that is, $\bchi\in\bH^1(\Omega)$. Second, selecting $\MM=0$, we conclude that
\[
   0 =
   \vdual{\bchi}{-\curl p} - \vdual{\brho}{t\curl p} + \vdual{\rot(t\brho+\bchi)}{p}_\cT
   \quad\forall p\in H^1_*(\Omega)\quad \text{``plus boundary condition''},
\]
that is, $t\brho+\bchi\in\Hrot{\Omega}$. Therefore, $(\bchi,\brho)\in U_1$. The boundary
conditions $(\bchi,\brho)\in U_{1,0}(t)$ follow from the corresponding conditions
in $U_{2,0}(t)$ and the dualities observed in Remark~\ref{rem_traces}.

The proof of ``$\Leftarrow$'' in the second statement is analogous. Specifically,
selecting $(\bpsi,\eeta)=(0,\eeta)$ yields $v\in H^1(\Omega)$, and
selecting $(\bpsi,\eeta)=(\bpsi,0)$ shows that $\SS\in\HDiv{\Omega}$. The symmetry
$\SS\in\HDivs{\Omega}$ is due to the imposed symmetry in $V_2^s(\cT)$, cf.~\eqref{V}.
Therefore, $(\SS,v)\in U_2$, and the boundary conditions $(\SS,v)\in U_{2,0}(t)$ can be seen as before.
%
\end{proof}

\section{Three-stage variational formulation and discretization} \label{sec_DPG}

We obtain a variational formulation of problem \eqref{prob} by testing, respectively,
\eqref{p1}, \eqref{p2}, \eqref{p3}, \eqref{p4} with $-\bchi$, $\cCinv\SS$, $v$, $\brho$,
and using trace operators $\tracepsi{}$, $\traceM{}$. This gives
\begin{align*}
   b(&(\bpsi,\eeta,\MM,p,\tpsi,\tM),(\bchi,\brho,\SS,v))
   :=\\
   &\vdual{\bpsi}{\curl v-\Div\SS}_\cT + \vdual{\MM}{\cCinv\SS+\sGrad\bchi}_\cT
   + \vdual{\eeta}{t\curl v-\brho}_\cT
   + \vdual{p}{\rot(t\brho+\bchi)}_\cT\\
   &+ \dual{\tpsi}{(\SS,v)}_\cS - \dual{\tM}{(\bchi,\brho)}_\cS
   =
   -\vdual{\grad r}{\bchi} 
\end{align*}
with $\tpsi=\tracepsi{}(\bpsi,\eeta)$ and $\tM=\traceM{}(\MM,p)$.
A solution $\uu=(\bpsi,\eeta,\MM,p,\tpsi,\tM)$ will be sought in space $U(t)$ defined as
\begin{align} \label{U}
   U(t) := \bL_2(\Omega) \times \bL_2(\Omega) \times \LL_2^s(\Omega) \times L_2^*(\Omega)
        \times  \Hpsiz(\cS,t) \times \HMz(\cS,t)
\end{align}
with (squared) norm
\[
   \|(\bpsi,\eeta,\MM,p,\tpsi,\tM)\|_{U(t)}^2 :=
   \|\bpsi\|^2 + \|\eeta\|^2 + \|\MM\|^2 + t^2 \|p\|_*^2 + \|\tpsi\|_{\psi,t}^2 + \|\tM\|_{M,t}^2.
\]
Finally, our three-stage variational formulation of the scaled Reissner--Mindlin problem
\eqref{RM3} consists in solving \eqref{prob_r}, then the ultraweak formulation of \eqref{prob},
and afterwards  problem \eqref{prob_u}, that is,
\begin{subequations} \label{VF}
\begin{alignat}{5}
   &r\in H^1_u(\Omega):\quad
   && \vdual{\grad r}{\grad\deltar} = \vdual{f}{\deltar} \quad \forall\deltar\in H^1_u(\Omega),
   \label{S1}\\
   &\uu=(\bpsi,\eeta,\MM,p,\tpsi,\tM)\in U(t):\quad
   && b(\uu,\vv) = -\vdual{\grad r}{\bchi} \quad \forall \vv=(\bchi,\brho,\SS,v)\in V(\cT),
   \label{S2}\\
   &u\in H^1_u(\Omega):\quad
   && \vdual{\grad u}{\grad\deltau} = t^2\vdual{f}{\deltau} + \vdual{\bpsi}{\grad\deltau}
   \quad \forall\deltau\in H^1_u(\Omega).
   \label{S3}
\end{alignat}
\end{subequations}
Problems \eqref{S1}, \eqref{S3} are obviously well posed and we are left with proving the
well-posedness of \eqref{S2}.

\begin{theorem} \label{thm_stab}
For given $t\in (0,1]$ and $r\in H^1_u(\Omega)$, there is a unique solution
$\uu=(\bpsi,\eeta,\MM,p,\tpsi,\tM)\in U(t)$ to \eqref{S2}. It is uniformly bounded:
\[
   \|\uu\|_{U(t)} \lesssim  \|\grad r\|
\]
with a hidden constant that is independent of $r$, $\cT$, and $t\in (0,1]$.
Furthermore, $(\bpsi,\eeta,\MM,p)\in U_{1,0}(t)\times U_{2,0}(t)$ solves \eqref{prob},
and $\tpsi=\tracepsi{}(\bpsi,\eeta)$, $\tM=\traceM{}(\MM,p)$.
\end{theorem}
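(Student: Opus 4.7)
The plan is to apply the Banach--Ne\v{c}as--Babu\v{s}ka theorem to \eqref{S2}. Continuity of $b(\cdot,\cdot)$ on $U(t)\times V(\cT)$ follows directly from the norm definitions together with Lemma~\ref{la_norms}, and the right-hand side $\vv=(\bchi,\brho,\SS,v)\mapsto -\vdual{\grad r}{\bchi}$ is continuous on $V(\cT)$ with norm bounded by $\|\grad r\|$. The task therefore reduces to (i) an inf-sup estimate for $b$ on $U(t)\times V(\cT)$ that is uniform in $t\in(0,1]$, and (ii) nondegeneracy on the test side. Both will be obtained by analyzing the adjoint problem.

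First I would study the adjoint problem: given $F\in U(t)'$, find $\vv\in V(\cT)$ such that $b(\uu,\vv)=F(\uu)$ for all $\uu\in U(t)$. Testing with $\uu$ having only the trace slots $\tpsi,\tM$ nonzero and invoking Lemma~\ref{la_cont} in its ``$\Leftarrow$'' direction forces conformity, namely $(\bchi,\brho)\in U_{1,0}(t)$ and $(\SS,v)\in U_{2,0}(t)$. Once $\vv$ is conforming, varying $\uu$ through the bulk components $\bpsi,\eeta,\MM,p$ recasts the identity $b(\uu,\vv)=F(\uu)$ as a strong-form adjoint system on $\Omega$; by construction, this system has essentially the same structure as \eqref{prob}, with $\bchi$ in the role of a rotation, $\SS$ a bending moment, and $v$ a scalar potential analogous to $p$. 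The stability bound $\|\vv\|_{V(\cT,t)}\lesssim\|F\|_{U(t)'}$, uniform in $t\in(0,1]$, follows from a dedicated regularity analysis of this adjoint system, using Korn's inequality \eqref{Korn} to pass from $\sGrad\bchi$-control to $\Grad\bchi$-control, and Poincar\'e's inequality \eqref{Poincare} for $v\in H^1_p(\Omega)$ in the quotient-space case $\G{f}=\emptyset$. Lemma~\ref{la_norms} then converts this adjoint stability into the required inf-sup estimate on the trace components, and BNB yields existence, uniqueness, and the bound $\|\uu\|_{U(t)}\lesssim\|\grad r\|$.

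It remains to identify the abstract solution with a solution of \eqref{prob}. Testing \eqref{S2} with $\vv\in V(\cT)$ smooth and supported in a single element $T\in\cT$ (so all skeletal dualities vanish) yields each of \eqref{p1}--\eqref{p4} distributionally on $T$, hence globally after varying $T$. Allowing then arbitrary $\vv\in V(\cT)$ and applying Lemma~\ref{la_cont} in the reverse direction identifies the trace unknowns as $\tpsi=\tracepsi{}(\bpsi,\eeta)$, $\tM=\traceM{}(\MM,p)$ and produces the boundary conditions encoded in $U_{1,0}(t)$ and $U_{2,0}(t)$.

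The main obstacle is the $t$-robust stability of the adjoint problem. The delicate coupling $t^{-2}\|\rot(t\brho+\bchi)\|_\cT^2$ in the $V_1(\cT,t)$-norm, together with the switching weight $t_*\in\{1,t\}$ dictated by \eqref{star}, means that one cannot simply quote off-the-shelf Reissner--Mindlin estimates: the argument must distinguish the case $\G{sc}\cup\G{sss}=\emptyset$ (where $t_*=1$ is admissible because the adjoint shear inherits additional regularity from the rotation boundary condition) from the complementary case (where only $t_*=t$ survives a uniform bound), and to track these distinctions through a Helmholtz decomposition on the adjoint side so that the weighted norms remain equivalent to those of the original Reissner--Mindlin system in the limit $t\to 0$.
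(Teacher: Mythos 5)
Your proposal follows the paper's route essentially verbatim: the Babu\v{s}ka--Brezzi/BNB framework, $t$-robust stability of the adjoint problem via Korn's and Poincar\'e's inequalities, and Lemmas~\ref{la_cont} and~\ref{la_norms} to identify the traces and to relate trace norms with dual norms. Two technical points deserve attention, one a genuine gap and one a cosmetic difference. First, for a general functional $F\in U(t)'$ that does not vanish on the trace slots, testing the adjoint identity with $\uu$ supported only in $(\tpsi,\tM)$ does \emph{not} force conformity of $\vv$: the direction of Lemma~\ref{la_cont} you invoke requires $\dual{\traceM{}(\MM,p)}{(\bchi,\brho)}_\cS=0$ and $\dual{\tracepsi{}(\bpsi,\eeta)}{(\SS,v)}_\cS=0$ for all conforming $(\MM,p)$, $(\bpsi,\eeta)$ --- mere equality with some nonzero functional yields no information. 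The paper circumvents this by invoking the abstract criterion of Carstensen--Demkowicz--Gopalakrishnan (cited at the start of \S\ref{sec_pf}), which splits the inf-sup condition into a trace inf-sup, \eqref{infsup2}, handled with constant one directly by Lemma~\ref{la_norms}, and a field inf-sup, \eqref{infsup1}, for which Proposition~\ref{prop_adj} is applied with the four field components $(\bpsi,\MM,t^2p_*,\eeta)$ as data so that the adjoint system \eqref{adj} is always posed with zero traces. Your reasoning is exactly right for the injectivity (there $F=0$), which is precisely Corollary~\ref{cor_inj}, but the inf-sup needs this additional decomposition. Second, the paper's adjoint stability analysis does not perform a Helmholtz decomposition on the adjoint side; Proposition~\ref{prop_adj} is proved by substituting $\SS=\cC(\GG_2-\sGrad\bchi)$ and $\brho=t\curl v-\bg_4$ to reduce \eqref{adj} to a coercive mixed, perturbed-Stokes-type problem, and the improved estimate on $\|v\|_*$ needed for the choice $t_*=1$ comes from the surjectivity of $\rot$ onto $L_2^0(\Omega)$ or $L_2(\Omega)$ (Arnold--Scott--Vogelius), not from a Helmholtz splitting.
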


A proof of this theorem is given in \S\ref{sec_pf}.
Theorem~\ref{thm_stab} immediately gives the well-posedness of \eqref{VF}.

\begin{cor} \label{cor_stab}
For given $t\in (0,1]$ and $f\in L_2(\Omega)$, there is a unique solution
$(r,\uu,u)\in H^1_u(\Omega)\times U(t)\times H^1_u(\Omega)$ to \eqref{VF}. It is uniformly bounded:
\[
   \|\grad r\|^2 + \|\uu\|_{U(t)}^2 + \|\grad u\|^2 \lesssim  \|f\|^2
\]
with a hidden constant that is independent of $f$, $\cT$, and $t\in (0,1]$.
\end{cor}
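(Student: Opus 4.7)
The plan is to prove the corollary by handling each of the three stages of \eqref{VF} in sequence, invoking Theorem~\ref{thm_stab} for the middle stage and standard Lax--Milgram arguments for the two Poisson problems bracketing it.

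First, I would treat stage \eqref{S1}. Since $\G{u}$ has positive measure, the space $H^1_u(\Omega)$ admits a Poincar\'e inequality, so the bilinear form $(\grad\cdot,\grad\cdot)$ is continuous and coercive there. Lax--Milgram then yields a unique $r\in H^1_u(\Omega)$. Testing with $\deltar=r$ and applying Cauchy--Schwarz together with Poincar\'e gives $\|\grad r\|^2=\vdual{f}{r}\lesssim \|f\|\,\|\grad r\|$, hence $\|\grad r\|\lesssim \|f\|$.

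Next, with $r$ in hand, stage \eqref{S2} is precisely the problem addressed by Theorem~\ref{thm_stab}. Its conclusion delivers a unique $\uu\in U(t)$ with
\[
   \|\uu\|_{U(t)} \lesssim \|\grad r\| \lesssim \|f\|,
\]
uniformly in $t\in(0,1]$ and $\cT$. In particular, the component $\bpsi\in\bL_2(\Omega)$ is controlled by $\|f\|$.

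Finally, stage \eqref{S3} is again a coercive Poisson problem on $H^1_u(\Omega)$ with data $t^2 f\in L_2(\Omega)$ and $\bpsi\in\bL_2(\Omega)$. Lax--Milgram gives a unique solution $u$, and testing with $\deltau=u$ yields
\[
   \|\grad u\|^2 = t^2\vdual{f}{u}+\vdual{\bpsi}{\grad u}
   \le t^2\|f\|\,\|u\|+\|\bpsi\|\,\|\grad u\|
   \lesssim \bigl(t^2\|f\|+\|\bpsi\|\bigr)\|\grad u\|,
\]
using Poincar\'e once more. Since $t\in(0,1]$ and $\|\bpsi\|\le\|\uu\|_{U(t)}\lesssim\|f\|$, we obtain $\|\grad u\|\lesssim\|f\|$. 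Combining the three bounds gives the asserted estimate.

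There is no real obstacle here; the only minor subtlety is remembering that uniformity in $t$ in stage~3 relies on the restriction $t\le 1$ (which absorbs the $t^2\|f\|$ term), and that Poincar\'e on $H^1_u(\Omega)$ is available precisely because $\G{u}$ has positive measure, which is part of the standing assumptions on the boundary partition.
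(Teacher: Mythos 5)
Your proposal is correct and follows essentially the same route the paper intends: the paper dispatches stages \eqref{S1} and \eqref{S3} as "obviously well posed" Poisson problems and states that Theorem~\ref{thm_stab} "immediately gives" the corollary, which is precisely the chain of Lax--Milgram bounds and the application of Theorem~\ref{thm_stab} that you spell out (including the use of $t\le 1$ and $\|\bpsi\|\le\|\uu\|_{U(t)}$ in stage 3).
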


In order to discretize the three-stage formulation we use standard finite elements for
problems \eqref{S1} and \eqref{S3}, and a Petrov--Galerkin discretization of \eqref{S2} with
optimal test functions, the DPG method. For simplicity we use the same mesh $\cT$
for the three stages and consider lowest-order approximations.
Therefore, we now assume that $\cT$ is a regular triangular mesh.
In the following, $P^k(\cT)$ denotes the space of piecewise polynomials of degree $k\ge 0$.
Below, we will also use the corresponding vector and tensor-valued spaces $P^k(\cT)^2$ and
$P^k(\cT)^{2\times 2}$, respectively. For Poisson problems \eqref{S1}, \eqref{S3}
we use finite element space
$P^{1,c}_u(\cT) := P^1(\cT)\cap H^1_u(\Omega)$ consisting of continuous, piecewise linear polynomials.
Then, selecting an approximation space $U_h(t)\subset U(t)$  (also of lowest order),
a discrete formulation of \eqref{VF} is
\begin{subequations} \label{VFh}
\begin{align}
   &r_h\in P^{1,c}_u(\cT):\quad
   \vdual{\grad r_h}{\grad\deltar} = \vdual{f}{\deltar} \quad \forall\deltar\in P^{1,c}_u(\cT),
   \label{S1h}\\
   &\uu_h=(\bpsi_h,\eeta_h,\MM_h,p_h,\tpsi_h,\tM_h)\in U_h(t):\quad
   b(\uu_h,\ttt\deltauu) = -\vdual{\grad r_h}{\ttt^\chi\deltauu}
   \quad \forall \deltauu\in U_h(t),
   \label{S2h}\\
   &u_h\in P^{1,c}_u(\cT):\quad
   \vdual{\grad u_h}{\grad\deltau}
   = t^2\vdual{f}{\deltau}
   + \vdual{\bpsi_h}{\grad\deltau}
   \quad \forall\deltau\in P^{1,c}_u(\cT).
   \label{S3h}
\end{align}
\end{subequations}
Here, $\ttt:\;U(t)\to V(\cT)$ denotes the \emph{trial-to-test operator} defined by
\begin{align} \label{ttt}
   \ip{\ttt(\uu)}{\vv}_{V(\cT,t)} = b(\uu,\vv)\quad\forall\vv\in V(\cT)
\end{align}
with inner product $\ip{\cdot}{\cdot}_{V(\cT,t)}$ in $V(\cT)$ that induces norm $\|\cdot\|_{V(\cT,t)}$
and $\ttt^\chi\uu:=\bchi$ for $\ttt\uu=(\bchi,\brho,\SS,v)$.

The following theorem states the robust quasi-optimal best approximation of scheme \eqref{VFh}.

\begin{theorem} \label{thm_DPG}
For given $t\in (0,1]$ and $f\in L_2(\Omega)$,
there is a unique solution $(r_h,\uu_h,u_h)$ to \eqref{VFh}. It satisfies
\begin{equation} \label{Cea1}
   \|\grad (r-r_h)\| + \|\uu-\uu_h\|_{U(t)}
   \lesssim
   \|\grad (r-\tilde r_h)\| + \|\uu-\tilde\uu_h\|_{U(t)}
\end{equation}
and
\begin{equation} \label{Cea2}
   \|\grad (u-u_h)\| \lesssim
   \|\grad (r-\tilde r_h)\| + \|\uu-\tilde\uu_h\|_{U(t)} + \|\grad(u-\tilde u_h)\|
\end{equation}
for any $(\tilde r_h,\tilde\uu_h,\tilde u_h)\in P^{1,c}_u(\cT)\times U_h(t)\times P^{1,c}_u(\cT)$.
Here, $(r,\uu,u)$ is the solution of \eqref{VF}, and the hidden constant is independent of
$\cT$, $t\in (0,1]$, the discrete spaces and datum $f$.
\end{theorem}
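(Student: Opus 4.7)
My plan is to handle the three stages of \eqref{VFh} sequentially: stage one and stage three are standard coercive Galerkin problems, while stage two is the heart of the argument and invokes the abstract DPG framework. Error bounds are then transferred across stages by perturbation in the data.

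For stage one, $P^{1,c}_u(\cT)\subset H^1_u(\Omega)$ is a conforming subspace of the coercive problem \eqref{S1}, so unique solvability of \eqref{S1h} is immediate and C\'ea's lemma delivers $\|\grad(r-r_h)\|\le\|\grad(r-\tilde r_h)\|$ for any $\tilde r_h\in P^{1,c}_u(\cT)$.

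For stage two I would invoke the Demkowicz--Gopalakrishnan best approximation theorem for DPG schemes with optimal test functions. This requires two ingredients: $t$-uniform inf-sup stability of $b$ on $U(t)\times V(\cT)$, which is precisely Theorem~\ref{thm_stab}; and $t$-uniform continuity of $b$ on the same pair of spaces, which follows term by term from the definitions of $\|\cdot\|_{U(t)}$ and $\|\cdot\|_{V(\cT,t)}$ together with Lemma~\ref{la_norms} for the trace pairings. The abstract theorem then gives existence and uniqueness of $\uu_h$ satisfying
\[
\|\tilde\uu-\uu_h\|_{U(t)}\lesssim\inf_{\tilde\uu_h\in U_h(t)}\|\tilde\uu-\tilde\uu_h\|_{U(t)},
\]
where $\tilde\uu\in U(t)$ denotes the continuous solution of \eqref{S2} with right-hand side $-\vdual{\grad r_h}{\bchi}$. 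Since $b(\uu-\tilde\uu,\vv)=-\vdual{\grad(r-r_h)}{\bchi}$, Theorem~\ref{thm_stab} applied to $\uu-\tilde\uu$ yields the perturbation bound $\|\uu-\tilde\uu\|_{U(t)}\lesssim\|\grad(r-r_h)\|$. A triangle inequality then gives
\[
\|\uu-\uu_h\|_{U(t)}\lesssim\|\grad(r-r_h)\|+\inf_{\tilde\uu_h\in U_h(t)}\|\uu-\tilde\uu_h\|_{U(t)},
\]
and combining with the stage-one estimate produces \eqref{Cea1}.

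For stage three, let $\tilde u\in H^1_u(\Omega)$ solve the continuous Poisson problem \eqref{S3} with $\bpsi$ replaced by $\bpsi_h$. Then $u_h$ is the standard Galerkin approximation of $\tilde u$ in $P^{1,c}_u(\cT)$, so C\'ea yields $\|\grad(\tilde u-u_h)\|\le\|\grad(\tilde u-\tilde u_h)\|$, and testing the equation for $u-\tilde u$ against itself gives $\|\grad(u-\tilde u)\|\le\|\bpsi-\bpsi_h\|\le\|\uu-\uu_h\|_{U(t)}$. Combining these two bounds via a triangle inequality together with \eqref{Cea1} produces \eqref{Cea2}. The main obstacle I anticipate is verifying the $t$-uniform continuity of $b$ with the chosen scaled norms so that the DPG reduction in stage two is genuinely $t$-robust; however, this amounts to bookkeeping given that the norms in \S\ref{sec_traces}--\S\ref{sec_DPG} and the operators $\tracepsi{},\traceM{}$ are designed precisely to make each term in $b$ balance against a corresponding term in the test norm.
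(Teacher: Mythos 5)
Your argument is correct and follows essentially the same route as the paper's proof: both treat the three stages sequentially, introduce the auxiliary continuous solutions of \eqref{S2} and \eqref{S3} with perturbed data ($r_h$ in place of $r$, $\bpsi_h$ in place of $\bpsi$), apply DPG quasi-optimality and C\'ea's lemma to the respective discretizations, and transfer the error across stages via the stability of the continuous problems and triangle inequalities. The only cosmetic difference is notation ($\tilde\uu$ versus the paper's $\uu^h$, $\tilde u$ versus $u^h$).
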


In \S\ref{sec_pf} we give a proof of this theorem. Main ingredient is the stability
of the adjoint problem of \eqref{prob}, which is of the same type. This is the subject of
the following subsection.

\begin{remark} \label{rem_test}
(i) In practice, optimal test functions $\ttt\deltauu$ in \eqref{S2h} have to be approximated.
This is done by solving \eqref{ttt} in a finite-dimensional subspace of $V(\cT)$.
In order to prove the well-posedness and quasi-optimal convergence of the then fully discrete
scheme one has to show the existence of a corresponding Fortin operator,
cf.~\cite{GopalakrishnanQ_14_APD}.

(ii) Recall that, in the case of boundary conditions without free part ($\G{f}=\emptyset$),
both trial space $U(t)$ and test space $V(\cT)$ have a quotient space component.
In the case of $U(t)$, $p\in L_2(\Omega)/\R$ is unique only up to an additive constant, cf.~\eqref{U}.
In practice, this can be fixed by adding a rank-one term to the linear system to require
$\vdual{p}{1}=0$. In the case of the test space, component $v$ of $\vv=(\bchi,\brho,\SS,v)\in V(\cT)$
is taken in $H^1(\cT)/\R$, cf.~\eqref{V}.
This can be implemented by simply using (a discrete subspace of) $H^1(\cT)$ instead of the quotient
space since $b(\uu,(0,0,0,1))=0$ for any $\uu\in U(t)$.
\end{remark}

\subsection{Stability of the adjoint problem} \label{sec_adj}

In the following we denote $L_2^0(\Omega):=\{v\in L_2(\Omega);\; \vdual{v}{1}=0\}$.

The continuous adjoint problem is as follows.
\emph{For given $\bg_1\in\bL_2(\Omega)$, $\GG_2\in\LL_2^s(\Omega)$, $g_3\in L_2^0(\Omega)$
if $\G{f}=\emptyset$, $g_3\in L_2(\Omega)$ if $\G{f}\not=\emptyset$, and
$\bg_4\in\bL_2(\Omega)$ find $(\bchi,\brho)\in U_{1,0}(t)$ and $(\SS,v)\in U_{2,0}(t)$
with $\SS\in\LL_2^s(\Omega)$ such that}
\begin{subequations} \label{adj}
\begin{alignat}{2}
    -\Div\SS + \curl v &= \bg_1  \label{a1},\\
    \cCinv\SS + \sGrad\bchi &= \GG_2     \label{a2},\\
    \rot(t\brho+\bchi) &= g_3       \label{a3},\\
    t\curl v-\brho    &= \bg_4        \label{a4}.
\end{alignat}
\end{subequations}

\begin{prop} \label{prop_adj}
Problem \eqref{adj} has a unique solution $(\bchi,\brho)\in U_{1,0}(t)\subset V_1(\cT)$,
$(\SS,v)\in U_{2,0}(t)\subset V_2(\cT)$ with $\SS\in\LL_2^s(\Omega)$. It is bounded as
\[
   \|(\bchi,\brho,\SS,v)\|_{V(\cT,t)}^2 \lesssim
   \|\bg_1\|^2 + \|\GG_2\|^2 + t^{-2} \|g_3\|^2 + \|\bg_4\|^2
\]
with a generic constant that is independent of $t>0$, $\cT$, and the given data
$\bg_1,\GG_2,g_3,\bg_4$.
\end{prop}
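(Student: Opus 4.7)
The plan is to algebraically eliminate $\SS$ and $\brho$ using the pointwise relations \eqref{a2},\eqref{a4}, thereby reducing the four-field system to a coupled elliptic problem for $(\bchi,v)\in\bH^1_\psi(\Omega)\times H^1_p(\Omega)$. I would set $\SS:=\cC(\GG_2-\sGrad\bchi)$ and $\brho:=t\curl v-\bg_4$, so that \eqref{a2},\eqref{a4} hold by construction; substituting into \eqref{a1},\eqref{a3}, testing with $(\tilde\bchi,\tilde v)$ from the same space, and integrating by parts (the surface integrals vanishing precisely because of the defining BCs of $U_{1,0}(t),U_{2,0}(t)$, which are the \emph{natural} BCs of this weak form and are recovered a posteriori once $\SS,\brho$ are reconstituted) leads to
\[
  a((\bchi,v),(\tilde\bchi,\tilde v))
  :=\vdual{\cC\sGrad\bchi}{\sGrad\tilde\bchi}
  -\vdual{\curl v}{\tilde\bchi}
  +\vdual{\bchi}{\curl\tilde v}
  +t^2\vdual{\curl v}{\curl\tilde v}
  =F(\tilde\bchi,\tilde v),
\]
where $F$ gathers the data; note that $\bg_4$ enters only through $\vdual{\bg_4}{\curl\tilde v}$ (after IBP in \eqref{a3}), so no regularity beyond $\bL_2$ on $\bg_4$ is required.

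The key step is coercivity: with $(\tilde\bchi,\tilde v)=(\bchi,v)$ the off-diagonal terms $-\vdual{\curl v}{\bchi}+\vdual{\bchi}{\curl v}$ cancel by $L_2$-symmetry, leaving
\[
  a((\bchi,v),(\bchi,v))
  =\vdual{\cC\sGrad\bchi}{\sGrad\bchi}+t^2\|\curl v\|^2
  \gtrsim\|\sGrad\bchi\|^2+t^2\|\curl v\|^2,
\]
which by Korn \eqref{Korn} dominates $\|\bchi\|_{H^1}^2+t^2\|\curl v\|^2$. For each fixed $t>0$, Lax--Milgram then yields existence and uniqueness. The only non-routine data bound is $|\vdual{g_3}{v}|\le\|g_3\|\,\|v\|_*\lesssim\|g_3\|\,\|\curl v\|$ (Poincar\'e \eqref{Poincare}), which split by Young's inequality with weight $t$ produces the characteristic $t^{-2}\|g_3\|^2$ summand. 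The outcome would be the $t$-uniform estimate
\[
  \|\sGrad\bchi\|^2+t^2\|\curl v\|^2
  \lesssim\|\bg_1\|^2+\|\GG_2\|^2+t^{-2}\|g_3\|^2+\|\bg_4\|^2.
\]
Recomposing $\SS,\brho$, the bounds $\|\brho\|\le t\|\curl v\|+\|\bg_4\|$, $\|\SS\|\lesssim\|\GG_2\|+\|\sGrad\bchi\|$, $\|\Div\SS-\curl v\|=\|\bg_1\|$, and $t^{-2}\|\rot(t\brho+\bchi)\|^2=t^{-2}\|g_3\|^2$ are immediate; in the case $t_*=t$ also $t_*^2\|v\|_*^2\le t^2\|\curl v\|^2$ by Poincar\'e.

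The main obstacle is the remaining case $t_*=1$ (i.e., $\G{sc}=\G{sss}=\emptyset$), where a $t$-\emph{uniform} bound $\|v\|_*^2\lesssim\|\bg_1\|^2+\|\GG_2\|^2+t^{-2}\|g_3\|^2+\|\bg_4\|^2$ must be extracted: the coercivity above only controls $t^2\|\curl v\|^2$, and Poincar\'e then loses a factor $t^{-2}$ as $t\to 0$. To close this gap I would exploit the purely ``displacement-type'' character of the admissible rotation BCs in this case (only hc, hss, f) by adding a duality argument in the spirit of Brezzi--Fortin \cite{BrezziF_86_NAM} and Arnold--Falk \cite{ArnoldF_89_UAF}: test the reduced system against an auxiliary pair built from the solution of a Poisson problem with right-hand side $v$, so as to extract a bound on $\|v\|_*$ directly from the data rather than via the $t$-weighted $\|\curl v\|$ estimate. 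This final step is the crux of the argument.
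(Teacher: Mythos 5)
Your reduction and coercivity argument coincide exactly with the paper's proof: eliminate $\SS,\brho$ via the pointwise identities \eqref{a2}, \eqref{a4}, pass to the primal $(\bchi,v)$ problem \eqref{mixed}, observe the skew-symmetric cancellation of the off-diagonal terms, invoke Korn and Poincar\'e, and use Young with weight $t$ to produce the $t^{-2}\|g_3\|^2$ contribution. The reconstitution of $\SS$, $\brho$ and the norm bookkeeping are likewise identical. So the bulk of your argument matches the paper.

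The crux you flag --- bounding $\|v\|_*$ uniformly in $t$ when $t_*=1$ --- is indeed the real content, and your instinct toward a Brezzi--Fortin-style argument is correct, but the sketch you give (``test against an auxiliary pair built from the solution of a Poisson problem with right-hand side $v$'') is imprecise and, taken literally, does not close. If you set $-\Delta z = v$ and try $\deltachi := \curl z$, the boundary conditions of $\bH^1_\psi(\Omega)$ (e.g.\ $\deltachi|_\Gamma = 0$ in the hard-clamped case) force $\grad z = 0$ on $\Gamma$, which a scalar Poisson problem cannot deliver --- you would need a biharmonic solve. What the paper actually uses is the Arnold--Scott--Vogelius regular-inversion theorem \cite[Theorem~7.1]{ArnoldSV_88_RID}: $\rot:\bH^1_\psi(\Omega)\to L_2^0(\Omega)$ (respectively $L_2(\Omega)$ if $\G{f}\neq\emptyset$) is surjective with bounded right inverse, and in the $t_*=1$ case (only $\G{hc},\G{hss},\G{f}$ present) one has $\dual{v}{\deltachi\cdot\bt}_\Gamma=0$, so $\vdual{\curl v}{\deltachi}=\vdual{v}{\rot\deltachi}$. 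This surjectivity supplies the inf-sup bound $\sup_{\deltachi}\vdual{\curl v}{\deltachi}/\|\sGrad\deltachi\|\gtrsim\|v\|_*$, and reading off equation \eqref{mixed_a} with that supremum then bounds $\|v\|_*$ by $\|\sGrad\bchi\|+\|\bg_1\|+\|\GG_2\|$, hence by the data. So the missing ingredient is the regular right inverse of $\rot$ compatible with the $\bc{\psi}$ condition, not an auxiliary Poisson solve; once you name Arnold--Scott--Vogelius, your proof and the paper's coincide.
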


\begin{proof}
We represent $\SS$ and $\brho$ via \eqref{a2} and \eqref{a4},
\begin{align} \label{Srho}
   \SS = \cC(\GG_2-\sGrad\bchi),\quad \brho = t\curl v-\bg_4.
\end{align}
Then, substituting $\SS$ in \eqref{a1} and testing with $-\deltachi$, and substituting
$\brho$ in \eqref{a3} and testing with $\deltav$, we obtain the following variational problem.
\emph{Find $(\bchi,v)\in \bH^1_\psi(\Omega)\times H^1_p(\Omega)$ such that}
\begin{subequations} \label{mixed}
\begin{align}
   \label{mixed_a}
   \vdual{\cC\sGrad\bchi}{\sGrad\deltachi} - \vdual{\curl v}{\deltachi}
   &= -\vdual{\bg_1}{\deltachi} + \vdual{\cC\GG_2}{\sGrad\deltachi},\\
   \vdual{\bchi}{\curl\deltav} + t^2\vdual{\curl v}{\curl\deltav}
   &= \vdual{g_3}{\deltav} + t \vdual{\bg_4}{\curl\deltav}
\end{align}
\emph{for any $(\deltachi,\deltav)\in \bH^1_\psi(\Omega)\times H^1_p(\Omega)$.}
Here we used that
\begin{align*}
   &\vdual{\Div\SS}{\deltachi} + \vdual{\SS}{\sGrad\deltachi} +
   \vdual{\rot(t\brho+\bchi)}{\deltav} - \vdual{t\brho+\bchi}{\curl\deltav}
   \\
   &= \dual{\SS\nn}{\deltachi}_\Gamma - \dual{(t\brho+\bchi)\cdot\bt}{\deltav}_\Gamma
   = 0
\end{align*}
\end{subequations}
due to the imposed boundary conditions, cf.~\eqref{U0}.
By coercivity of the bilinear form from \eqref{mixed},
Korn's and Poincar\'e's inequalities \eqref{Korn}, \eqref{Poincare},
there exists a unique solution to problem \eqref{mixed} with bound
\[
   \|\bchi\|^2 + \|\Grad\bchi\|^2 + t^2\|v\|_*^2 + t^2\|\curl v\|^2
   \lesssim
   \|\bg_1\|^2 + \|\GG_2\|^2 + t^{-2}\|g_3\|^2 + \|\bg_4\|^2.
\]
In the case that $\dual{v}{\deltachi\cdot\bt}_\Gamma=0$ for
$v\in H^1_p(\Omega)$, $\deltachi\in\bH^1_\psi(\Omega)$ (that is, when $\Gamma$ is only composed
of $\G{hc}$, $\G{hss}$ and $\G{f}$) then
$\vdual{\curl v}{\deltachi}=\vdual{v}{\rot\deltachi}$, and the surjectivity of
\[
   \rot:\;\begin{cases}
      \bH^1_\psi(\Omega) \to L_2^0(\Omega)
      & \quad\text{if } \G{f}=\emptyset,\\
      \bH^1_\psi(\Omega) \to L_2(\Omega)
      & \quad\text{otherwise},
   \end{cases}
\]
(see~\cite[Theorem~7.1]{ArnoldSV_88_RID}) allows to control $v$ more strongly due to the implied
inf-sup property
\[
   \sup_{0\not=\deltachi\in \bH^1_\psi(\Omega)}
   \frac {\vdual{\curl v}{\deltachi}}{\|\sGrad\deltachi\|} \gtrsim \|v\|_*.
\]
By \eqref{mixed_a} we conclude in those cases that
\[
   \|v\|_*^2 \lesssim \|\sGrad\bchi\|^2 + \|\bg_1\|^2 + \|\GG_2\|^2
             \lesssim \|\bg_1\|^2 + \|\GG_2\|^2 + t^{-2} \|g_3\|^2 + \|\bg_4\|^2.
\]
Finally, the bounds for $\|\brho\|$, $\|\SS\|$ are implied by \eqref{Srho} and the previous estimates,
and $\|\Div\SS-\curl v\|_\cT^2=\|\bg_1\|^2$,
$t^{-2} \|\rot(t\brho+\bchi)\|_\cT^2=t^{-2}\|g_3\|^2$ by \eqref{a1}, \eqref{a3}.
\end{proof}

The well-posedness of \eqref{adj} implies the following injectivity.

\begin{cor} \label{cor_inj}
Let $t>0$ be given. If $\vv\in V(\cT,t)$ satisfies $b(\deltauu,\vv)=0$ for any $\deltauu\in U(t)$ then
$\vv=0$.
\end{cor}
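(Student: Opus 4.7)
The plan is to interpret the hypothesis $b(\deltauu,\vv)=0$ for all $\deltauu\in U(t)$ as saying that $\vv=(\bchi,\brho,\SS,v)$ is a conforming solution of the adjoint system \eqref{adj} with zero data, and then invoke the uniqueness part of Proposition~\ref{prop_adj}. Concretely, I need to read off the four adjoint equations from testing against the four $L_2$-type field unknowns of $U(t)$, and the conformity together with all boundary conditions from testing against the trace unknowns $\tpsi$ and $\tM$.

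First I would test with $\deltauu\in U(t)$ supported on a single field component at a time. Varying the $\bpsi$-slot over $\bL_2(\Omega)$, the $\eeta$-slot over $\bL_2(\Omega)$, and the $\MM$-slot over $\LL_2^s(\Omega)$ directly produces the piecewise identities
\[
   \Div\SS-\curl v=0,\qquad t\curl v-\brho=0,\qquad \cCinv\SS+\sGrad\bchi=0
\]
on every $T\in\cT$ (for the third one using that both $\cCinv\SS$ and $\sGrad\bchi$ are symmetric, matching the symmetry of the test $\MM$). Varying the $p$-slot over $L_2^*(\Omega)$ gives $\rot(t\brho+\bchi)=0$ piecewise on $\cT$, with the single caveat that if $\G{f}=\emptyset$, so $L_2^*(\Omega)=L_2(\Omega)/\R$, this identity is initially known only up to one global constant $c$ — a loose end I come back to below.

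Next I would switch on the trace components. Since $\Hpsiz(\cS,t)=\tracepsi{}(U_{1,0}(t))$ and $\HMz(\cS,t)=\traceM{}(U_{2,0}(t))$, varying $\tpsi$ and $\tM$ produces the orthogonalities
\[
   \dual{\tracepsi{}(\bpsi,\eeta)}{(\SS,v)}_\cS=0\ \forall(\bpsi,\eeta)\in U_{1,0}(t),\qquad
   \dual{\traceM{}(\MM,p)}{(\bchi,\brho)}_\cS=0\ \forall(\MM,p)\in U_{2,0}(t),
\]
and the ``$\Leftarrow$'' directions of Lemma~\ref{la_cont} upgrade these to $(\SS,v)\in U_{2,0}(t)$ and $(\bchi,\brho)\in U_{1,0}(t)$. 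In particular this supplies global conformity ($\SS\in\HDivs{\Omega}$, $v\in H^1_p(\Omega)$, $\bchi\in\bH^1_\psi(\Omega)$, $t\brho+\bchi\in\Hrot{\Omega}$) and all the canonical boundary conditions needed in \eqref{adj}.

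The residual constant $c$ is then closed off by Stokes' theorem: when $\G{f}=\emptyset$, we have $\Gamma=\G{u}$ and $\bt\cdot(t\brho+\bchi)=0$ on all of $\Gamma$, so $c\,|\Omega|=\int_\Omega\rot(t\brho+\bchi)=\int_\Gamma\bt\cdot(t\brho+\bchi)=0$, forcing $c=0$. Altogether $(\bchi,\brho,\SS,v)$ is a conforming solution of \eqref{adj} with $\bg_1=0$, $\GG_2=0$, $g_3=0$, $\bg_4=0$, and Proposition~\ref{prop_adj} forces $\vv=0$. I do not anticipate any real obstacle; the only delicate spot is the $L_2^*$-bookkeeping in the quotient case $\G{f}=\emptyset$, which is dispatched in one line once the global conformity of $t\brho+\bchi$ is in hand.
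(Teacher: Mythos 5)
Your proof is correct and follows the paper's approach: testing with the trace variables and invoking Lemma~\ref{la_cont} to establish $\vv\in U_{1,0}(t)\times U_{2,0}(t)$, testing with the field variables to identify $\vv$ as a solution of the homogeneous adjoint system~\eqref{adj}, and applying the uniqueness in Proposition~\ref{prop_adj}. The only difference is the order: you derive the piecewise equations first and conformity second, which forces you to close off the $L_2(\Omega)/\R$ constant by Stokes' theorem (correctly done), whereas the paper establishes conformity first so that $\rot(t\brho+\bchi)\in L_2^0(\Omega)$ automatically and no extra step is needed.
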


\begin{proof}
Let $\vv\in V(\cT,t)$ be given with $b(\deltauu,\vv)=0$ for any $\deltauu\in U(t)$.
Selecting $\deltauu=(\bpsi,\eeta,\MM,p,\tpsi,\tM)$ with arbitrary traces
$\tpsi\in\Hpsiz(\cS,t)$, $\tM\in \HMz(\cS,t)$ and field
variables $\bpsi$, $\eeta$, $\MM$, $p$ all zero, Lemma~\ref{la_cont} shows that
$\vv=(\bchi,\brho,\SS,v)\in U_{1,0}(t)\times U_{2,0}(t)$.
We conclude that $(\bchi,\brho,\SS,v)$ solves the adjoint problem \eqref{adj} with homogeneous
data. Therefore, $\vv=0$ by Proposition~\ref{prop_adj}.
\end{proof}

\subsection{Proofs of Theorems~\ref{thm_stab},~\ref{thm_DPG}} \label{sec_pf}

With the preparations made, proofs of our main theorems are standard. We recall the main steps.
Theorem~\ref{thm_stab} follows from the ingredients of the Babu\v{s}ka--Brezzi framework,
verified in the following.

\begin{enumerate}
\item {\bf Boundedness of the functional.}
\begin{align*}
   &-\vdual{\grad r}{\bchi} \le \|\grad r\| \|\bchi\| \le \|\grad r\| \|\vv\|_{V(\cT,t)}
   \quad\forall \vv=(\bchi,\brho.\SS,v)\in V(\cT)
\end{align*}
holds by definition of norm $\|\cdot\|_{V(\cT,t)}$.

\item {\bf Boundedness of the bilinear form.}  We make use of Lemma~\ref{la_norms}. Then
the bound $b(\uu,\vv)\lesssim \|\uu\|_{U(t)}\|\vv\|_{V(\cT,t)}$ for any
$\uu\in U(t)$ and $\vv\in V(\cT)$ holds by the Cauchy--Schwarz inequality and the definitions
of the norms.

\item {\bf Injectivity.}
\[
   \sup_{0\not=\deltauu\in U(t)} \frac {b(\deltauu,\vv)}{\|\deltauu\|_{U(t)}} > 0
   \quad\forall \vv\in V(\cT,t)\setminus\{0\}
\]
holds by Corollary~\ref{cor_inj}.

\item {\bf Inf-sup condition.} We follow the criteria given in \cite[Theorem~3.3]{CarstensenDG_16_BSF}:
inf-sup condition
\begin{align} \label{infsup}
   \sup_{0\not=\vv\in V(\cT)}
   \frac {b(\uu;\vv)}{\|\vv\|_{\VV(\cT,t)}}
   \gtrsim \|\uu\|_{U(t)} \quad\forall \uu=(\bpsi,\eeta,\MM,p,\tpsi,\tM)\in U(t)
\end{align}
follows from the inf-sup conditions
\begin{align}
   \label{infsup2}
   \sup_{0\not=\vv=(\bchi,\brho,\SS,v)\in V(\cT)}
   \frac{\dual{\tpsi}{(\SS,v)}_\cS - \dual{\tM}{(\bchi,\brho)}_\cS}{\|\vv\|_{V(\cT,t)}}
   \gtrsim
   \bigl(\|\tpsi\|_{\psi,t}^2 + \|\tM\|_{M,t}^2\bigr)^{1/2}
\end{align}
for any $(\tpsi,\tM)\in\Hpsiz(\cS,t)\times\HMz(\cS,t)$, and
\begin{align}
   \label{infsup1}
   \sup_{0\not=\vv=(\bchi,\brho,\SS,v)\in U_{1,0}(t)\times U_{2,0}(t)}
   &\frac{b(\bpsi,\eeta,\MM,p,0,0;\vv)}{\|\vv\|_{V(\cT,t)}}
   \gtrsim \bigl(
               \|\bpsi\|^2 + \|\eeta\|^2 + \|\MM\|^2 + t^2 \|p\|^2
            \bigr)^{1/2}
\end{align}
for any $(\bpsi,\eeta,\MM,p)\in \bL_2(\Omega)\times \bL_2(\Omega)\times \LL_2^s(\Omega)\times L_2(\Omega)$.

By Lemma~\ref{la_norms}, inf-sup property \eqref{infsup2} is satisfied with constant $1$.
In what follows, let $p_*$ denote the representant of
$p\in L_2^*(\Omega)=L_2(\Omega)/\R$ with $\vdual{p_*}{1}=0$ if $\G{f}=\emptyset$,
and $p_*:=p$ if $\G{f}\not=\emptyset$.
Inf-sup condition \eqref{infsup1} follows by application of Proposition~\ref{prop_adj}
with data
\(
   (\bg_1,\GG_2,g_3,\bg_4):= (\bpsi,\MM,t^2 p^*,\eeta)
\)
in problem \eqref{adj} with solution denoted as $\vv^*\in U_{1,0}(t)\times U_{2,0}(t)$:
\begin{align*}
   &\sup_{0\not=\vv\in U_{1,0}(t)\times U_{2,0}(t)}
   \frac{b(\bpsi,\eeta,\MM,p,0,0;\vv)}{\|\vv\|_{V(\cT,t)}}
   \ge
   \frac{\vdual{\bpsi}{\bg_1} + \vdual{\MM}{\GG_2} + \vdual{p}{g_3} + \vdual{\eeta}{\bg_4}}
        {\|\vv^*\|_{V(\cT,t)}}
   \\
   &\gtrsim
   \frac{\|\bpsi\|^2 + \|\MM\|^2 + t^2 \|p_*\|^2 + \|\eeta\|^2}
        {\|\bg_1\|^2 + \|\GG_2\|^2 + t^{-2} \|g_3\|^2 + \|\bg_4\|^2\bigr)^{1/2}}
   =
   \bigl(\|\bpsi\|^2 + \|\MM\|^2 + t^2 \|p\|_*^2 + \|\eeta\|^2\bigr)^{1/2}.
\end{align*}
\end{enumerate}
It is clear that $(\bpsi,\eeta,\MM,p)\in U_{1,0}(t)\times U_{2,0}(t)$ solves problem \eqref{prob},
and that $\tpsi=\tracepsi{}(\bpsi,\eeta)$, $\tM=\traceM{}(\MM,p)$.
Therefore, Theorem~\ref{thm_stab} is proved.

It remains to prove Theorem~\ref{thm_DPG}.
Let $\uu^h=(\bpsi^h,\eeta^h,\MM^h,p^h,\tpsi^h,\tM^h)$ denote
the solution of \eqref{S2} with datum $r_h$ instead of $r$.
We denote as $B:\;U(t)\to V(\cT)'$ the operator induced by bilinear form $b(\cdot,\cdot)$.
Interpretation of the DPG scheme as a minimum residual method and uniform
equivalence of the norms $\|B\cdot\|_{V(\cT,t)'}$ and $\|\cdot\|_{U(t)}$
(due to the uniform boundedness of $b(\cdot,\cdot)$ and inf-sup property \eqref{infsup}) show that
\begin{align*}
   \|\uu^h-\uu_h\|_{U(t)} \lesssim \|\uu^h-\tilde\uu_h\|_{U(t)}\quad\forall \tilde\uu_h\in U_h(t).
\end{align*}
By the same arguments, variational formulation \eqref{S2} is stable, that is,
\begin{align} \label{est2}
   \|\uu-\uu^h\|_{U(t)} \lesssim \|\grad(r-r_h)\|.
\end{align}
An application of the triangle inequality (twice) and quasi-optimal convergence of \eqref{S1h} prove that
\begin{align*}
   \|\grad r-\grad r_h\| + \|\uu-\uu_h\|_{U(t)} \lesssim
   \|\grad r-\grad\tilde r_h\| + \|\uu-\tilde\uu_h\|_{U(t)}
   \quad\forall\tilde r_h\in P^{1,c}_u(\cT),\;\tilde\uu_h\in U_h(t).
\end{align*}
This is \eqref{Cea1}.
We proceed in the same way to bound $\|\grad(u-u_h)\|$. Defining $u^h\in H^1_u(\Omega)$ by
\[
   \vdual{\grad u^h}{\grad\deltau}
   = t^2\vdual{f}{\deltau} + \vdual{\bpsi_h}{\grad\deltau}
   \quad \forall\deltau\in H^1_u(\Omega),
\]
standard estimates (stability of \eqref{S3} by \eqref{Poincare}, Galerkin orthogonality of \eqref{S3h},
triangle inequality) imply that
\begin{align} \label{est3}
   \|\grad(u^h-u_h)\|
   \lesssim \|\grad(u^h-\tilde u_h)\|
   &\le \|\grad(u-u^h)\| + \|\grad(u-\tilde u_h)\|
   \nonumber\\
   &\lesssim \|\bpsi-\bpsi_h\| + \|\grad(u-\tilde u_h)\|
   \quad\forall \tilde u_h\in P^{1,c}_u(\cT).
\end{align}
Using again the triangle inequality and bounding $\|\bpsi-\bpsi_h\|\le \|\uu-\uu_h\|_{U(t)}$,
an application of the previous estimate \eqref{Cea1} shows that \eqref{Cea2} holds.
This finishes the proof of Theorem~\ref{thm_DPG}.

\subsection{Robust DPG scheme for large domains} \label{sec_large}

Let us present the changes that are needed for our DPG scheme to be robust for larger domains,
cf.~Remark~\ref{rem_d}. We select a number $d=d(\Omega)>0$ so that the Korn and
Poincar\'e inequalities
\begin{align*}
   \|\bchi\|^2 + d^2 \|\Grad\bchi\|^2 &\lesssim d^2 \|\sGrad\bchi\|^2 \ \forall\bchi\in \bH^1_\psi(\Omega),
   \quad
   \|v\|_* \lesssim d \|\grad v\| \ \forall v\in H^1_p(\Omega)
\end{align*}
hold uniformly with respect to the diameter of $\Omega$. A standard choice is
$d=\diam(\Omega)$. We note that this tuning can be refined for an-isotropic domains,
cf.~\cite{FuehrerH_21_RDM}.
Analogously to the definition of $t_*$, cf.~\eqref{star}, we introduce
\begin{alignat*}{4}
   d_*:=1\quad \text{if}\ \G{sc}=\G{sss}=\emptyset
   \quad\text{and}\quad
   d_*:=d\quad \text{if}\ \G{sc}\cup\G{sss}\not=\emptyset.
\end{alignat*}
We then scale the norms as follows. In the test space we select
\begin{align*}
   \|(\bchi,\brho)\|_{V_1(\cT,t)}^2 &:=
   d^{-2} \|\bchi\|^2 + \|\Grad\bchi\|_\cT^2 + \|\brho\|^2 + t^{-2} d^2 \|\rot(t\brho+\bchi)\|_\cT^2
   &&\bigl((\bchi,\brho)\in V_1(\cT)\bigr),\\
   \|(\SS,v)\|_{V_2(\cT,t)}^2 &:=
   \|\SS\|^2 + t_*^2 d_*^{-2} \|v\|_*^2 + d^2 \|\Div\SS-\curl v\|_\cT^2 + t^2 \|\curl v\|_\cT^2
   &&\bigl((\SS,v)\in V_2(\cT)\bigr).
\end{align*}
For the trace norms we re-scale the norms in $U_1$ and $U_2$ (cf.~\eqref{U_cont}) as
\begin{align*}
   \|(\bpsi,\eeta)\|_{U_1(t)}^2 &:=
   d^{-2} \|\bpsi\|^2 + \|\sGrad\bpsi\|^2 + \|\eeta\|^2 + t_*^{-2} d_*^2 \|\rot(t\eeta+\bpsi)\|^2
   && \bigl((\bpsi,\eeta)\in U_1\bigr),\\
   \|(\MM,p)\|_{U_2(t)}^2 &:=
   \|\MM\|^2 + t^2 d^{-2} \|p\|_*^2 + d^2 \|\Div\MM-\curl p\|^2 + t^2 \|\curl p\|^2
   && \bigl((\MM,p)\in U_2\bigr),
\end{align*}
and define norms $\|\cdot\|_{\psi,t}$ and $\|\cdot\|_{M,t}$ in $\Hpsiz(\cS,t)$ and $\HMz(\cS,t)$,
respectively, as in \eqref{norms_trace}. Finally, the (squared) re-scaled norm in the trial space is
\[
   \|\uu\|_{U(t)}^2 :=
   d^{-2} \|\bpsi\|^2 + \|\eeta\|^2 + \|\MM\|^2 + t^2 d^{-2} \|p\|_*^2
   + \|\tpsi\|_{\psi,t}^2 + \|\tM\|_{M,t}^2
\]
for $\uu=(\bpsi,\eeta,\MM,p,\tpsi,\tM)\in U(t)$, cf.~\eqref{U}, where the trace norms are the
re-scaled ones.

All relations and estimates from
Section~\ref{sec_spaces} hold true uniformly with respect to the diameter
of $\Omega$, when replacing the norms with the re-scaled ones just defined.
In particular, the equality statements of Lemma~\ref{la_norms} are valid.
Furthermore, the stability statement from Proposition~\ref{prop_adj} becomes
\[
   \|(\bchi,\brho,\SS,v)\|_{V(\cT,t)}^2 \lesssim
   d^2 \|\bg_1\|^2 + \|\GG_2\|^2 + t^{-2} d^2 \|g_3\|^2 + \|\bg_4\|^2.
\]
Repeating the steps from \S\ref{sec_pf} then shows the equivalence of norms
\[
   \|\cdot\|_{U(t)}\simeq \|B\cdot\|_{V(\cT,t)'}\quad\text{in}\ U(t),
\]
uniformly with respect to $t\in (0,1]$ and $\diam(\Omega)$.

Finally, with respect to the quasi-optimal error estimates, stage 1 of scheme \eqref{VFh} stays
optimal in the $H^1(\Omega)$-seminorm. On the other hand, the estimate for stage 2 requires
stability \eqref{est2} which now renders
\[
   \|\uu-\uu^h\|_{U(t)}^2 \lesssim d^2 \|\grad(r-r_h)\|^2
\]
due to the weighting $d^{-1} \|\bpsi\|$ of the norm of test-function component $\bpsi$ in \eqref{S2h}.
We therefore have the a priori error estimates
\begin{align*}
   \|\grad (r-r_h)\|^2 &\lesssim \|\grad (r-\tilde r_h)\|^2
   &&\forall\tilde r_h\in P^{1,c}_u(\cT),\\
   \|\uu-\uu_h\|_{U(t)}^2 &\lesssim d^2 \|\grad (r-\tilde r_h)\|^2 + \|\uu-\tilde\uu_h\|_{U(t)}^2
   &&\forall\tilde r_h\in P^{1,c}_u(\cT),\;\tilde\uu_h\in U_h(t).
\end{align*}
Proceeding as in \eqref{est3}, we obtain the error estimate for the third stage,
\begin{align*}
   \|\grad(u-u_h)\|^2
   &\lesssim \|\bpsi-\bpsi_h\|^2 + \|\grad(u-\tilde u_h)\|^2
   \lesssim d^2 \|\uu-\uu_h\|_{U(t)}^2 + \|\grad(u-\tilde u_h)\|^2
   \\
   &\lesssim
   d^4 \|\grad (r-\tilde r_h)\|^2 + d^2\|\uu-\tilde\uu_h\|_{U(t)}^2 + \|\grad(u-\tilde u_h)\|^2
   \quad\forall \tilde r_h, \tilde u_h\in P^{1,c}_u(\cT), \tilde\uu_h\in U_h(t).
\end{align*}
These a priori error estimates for $r,\uu,u$ hold uniformly with respect to $t\in(0,1]$ and
the diameter of $\Omega$.



\section{Locking-free lowest-order discretization for hard-clamped plates} \label{sec_lf}

In order to prove our lowest-order scheme to be locking free, the solution must be sufficiently
regular. Furthermore, to be able to use standard discrete spaces we restrict our analysis to the
hard-clamped case. This is also the situation where regularity estimates are known.
In the following, $\|\cdot\|_m$ denotes the standard Sobolev norm in $H^m(\Omega)$
($m=1,2$) for scalar, vector and tensor functions. Our assumption is the following.

\begin{ass} \label{ass1}
We consider the hard-clamped case, $\G{}=\G{hc}$.
For given $f\in L_2(\Omega)$ and $t\in (0,1]$, let
$(\bpsi,p)\in \bH^1_0(\Omega)\times H^1(\Omega)$, $r, u\in H^1_0(\Omega)$
denote the solution components of problems \eqref{prob_r}, \eqref{prob}, \eqref{prob_u}.
We select $p$ with $\vdual{p}{1}=0$. The regularity estimate
\begin{align*}
    \|r\|_2 + \|u\|_2 + \|\bpsi\|_2 + \|p\|_1 + t\|p\|_2 \lesssim \|f\|
\end{align*}
holds true with a hidden constant that is independent of $f$ and $t$.
\end{ass}

For a special case, Arnold and Falk proved that this assumption is true. We note that this
includes the case where, on the right-hand side of \eqref{S2}, $r$ is replaced
with its conforming finite element approximation $r_h$.

\begin{prop} \label{prop_reg} \cite[Theorem~2.1]{ArnoldF_89_UAF}\\
If $\Omega$ is convex and $\cC$ is the identity tensor then Assumption~\ref{ass1} holds true.
\end{prop}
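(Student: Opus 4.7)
The proof is essentially a translation of the regularity result of Arnold and Falk into our notation. The plan is to handle the three subproblems \eqref{prob_r}, \eqref{prob}, \eqref{prob_u} in turn and combine the resulting bounds.

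First, for $r$: since $\Gamma=\G{hc}$, problem \eqref{prob_r} reads $r\in H^1_0(\Omega)$ with $-\Delta r = f$ in the weak sense. Because $\Omega$ is a convex polygon, standard elliptic regularity (e.g.\ Grisvard) immediately yields $\|r\|_2\lesssim \|f\|$.

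Second, for $\bpsi$ and $p$: with $\cC$ taken to be the identity tensor and boundary $\Gamma=\G{hc}$, the subsystem \eqref{p1}--\eqref{p4} together with the boundary conditions $\bpsi=0$ on $\Gamma$, $p\in H^1(\Omega)/\R$ coincides, up to the scaling $f\to t^3 f$, $\MM\to t^3\MM$, $\bq\to t^3\bq$ carried out in Section~\ref{sec_model}, with the Helmholtz-decomposed Reissner--Mindlin system analyzed in \cite{ArnoldF_89_UAF}. Their Theorem~2.1 then delivers the bound
\begin{align*}
   \|\bpsi\|_2 + \|p\|_1 + t\,\|p\|_2 \lesssim \|f\|
\end{align*}
with a constant independent of $t\in(0,1]$. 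The chosen normalization $\vdual{p}{1}=0$ matches their selection of representative in $L_2(\Omega)/\R$.

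Third, for $u$: formulation \eqref{prob_u} identifies $u$ as the $H^1_0(\Omega)$-solution of $-\Delta u = t^2 f - \div\bpsi$. Since $t\le 1$ and $\|\div\bpsi\|\lesssim \|\bpsi\|_2\lesssim \|f\|$ by the previous step, the right-hand side lies in $L_2(\Omega)$ with norm $\lesssim \|f\|$, and convex-polygon Poisson regularity once more yields $\|u\|_2\lesssim \|f\|$. Summing the three estimates gives Assumption~\ref{ass1}.

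There is no real technical obstacle here once the reformulation of Section~\ref{sec_model} is in place: the only point to verify is that the Helmholtz splitting introduced via \eqref{prob_r}--\eqref{prob_p}, together with the hard-clamped boundary conditions and the chosen normalization of $p$, matches the set-up of \cite{ArnoldF_89_UAF}, which is immediate from inspection of their problem statement. The estimate for $u$ is a trivial add-on and is not listed explicitly in their theorem.
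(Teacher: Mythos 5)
Your proposal is correct and takes essentially the same route as the paper: the paper gives no written proof at all, simply attributing the statement to \cite[Theorem~2.1]{ArnoldF_89_UAF}, and what you have done is unpack exactly what that citation requires (Poisson regularity on a convex polygon for $r$ and $u$, plus the Arnold--Falk regularity theorem for $\bpsi$ and $p$). The only small inaccuracy is your remark that the deflection estimate ``is not listed explicitly in their theorem'' --- Arnold and Falk's Theorem~2.1 does bound $\|\omega\|_2$ (their deflection) along with $\|\phi\|_2$, $\|r\|_2$, $\|p\|_1$, $t\|p\|_2$, so the $u$-estimate is in fact part of the cited result; your re-derivation via $-\Delta u = t^2 f - \div\bpsi$ is nevertheless a harmless and self-contained alternative.
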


As previously specified in Section~\ref{sec_DPG}, we use regular triangular meshes which we now
assume to be shape regular, and denote $h:=\max\{\diam(T); T\in\cT\}$.
We approximate $H^1_0(\Omega)$ with
continuous, piecewise linear polynomials, the space being denoted as
$P^{1,c}_0(\cT)\subset H^1_0(\Omega)$, as before.
In the following we also need the piecewise polynomial space without boundary condition,
$P^{1,c}(\cT):=P^1(\cT)\cap H^1(\Omega)$.
We still have to specify the discrete subspace $U_h(t)\subset U(t)$.
To this end, let $\mathcal{RT}^0(\cT)\subset \Hdiv{\Omega}$ and
$\mathcal{ND}^0(\cT)\subset \Hrot{\Omega}$ denote the lowest-order
Raviart--Thomas and N\'ed\'elec spaces, respectively. Of course, $\mathcal{ND}^0(\cT)$ is a rotation
of $\mathcal{RT}^0(\cT)$.
Subspaces $\mathcal{ND}^0_0(\cT)\subset\Hrotz{\Omega}\subset\Hrot{\Omega}$
denote the corresponding spaces of functions with zero tangential component on $\G{}$.
We define the discrete trace spaces
\begin{align*}
   \Hpsih(\cS,t) &:= \tracepsi{}\Bigl(P^{1,c}_0(\cT)^2\times\mathcal{ND}^0_0(\cT)\Bigr)
   \subset \Hpsiz(\cS,t),\\
   \HMh(\cS,t) &:= \traceM{}\Bigl(\mathcal{RT}^0(\cT)^2\times P^{1,c}(\cT)\Bigr)
   \subset \HMz(\cS,t).
\end{align*}
Here, $\mathcal{RT}^0(\cT)^2\subset\HDiv{\Omega}$ indicates that the Raviart--Thomas elements
are taken row-wise. Recalling Remark~\ref{rem_traces}, we note that
$\Hpsih(\cS,t)$ consists of two components,
one of continuous, piecewise linear vector functions on $\cS$,
and the second of piecewise constant functions on $\cS$, plus homogeneous boundary conditions.
Space $\HMh(\cS,t)$ also has two components, the first amounting to piecewise constant
vector functions on $\cS$, and the second to continuous, piecewise linear polynomials on $\cS$.
The field variables are approximated by piecewise constant functions. Therefore, the discrete
approximation space for our DPG scheme \eqref{S2h} is
\begin{align} \label{Uh}
   U_h(t) := P^0(\cT)^2 \times P^0(\cT)^2 \times
             \Bigl(P^0(\cT)^{2\times 2}\cap\LL_2^s(\Omega)\Bigr) \times P^0(\cT)
   \times \Hpsih(\cS,t) \times \HMh(\cS,t).
\end{align}
Under Assumption~\ref{ass1} the resulting DPG scheme is locking free, as stated next.

\begin{theorem} \label{thm_DPG_lf}
Suppose that Assumption~\ref{ass1} holds true. In particular, we consider the hard-clamped
boundary condition. Furthermore, we assume that $\cC$ is a $C^1$-tensor.
For $f\in L_2(\Omega)$ and $t\in (0,1]$ let $\uu=(\bpsi,\eeta,\MM,p,\tpsi,\tM)\in U(t)$
and $u\in H^1_0(\Omega)$ denote the solutions of~\eqref{S2} and \eqref{S3}, respectively.
Let $\uu_h=(\bpsi_h,\eeta_h,\MM_h,p_h,\tpsi_h,\tM_h)\in U_h(t)$
and $u_h\in P^{1,c}_0(\cT)$ be their corresponding approximations from \eqref{S2h} and \eqref{S3h}.
Using regular, shape-regular triangular meshes, the error estimate
\begin{align*}
    \|\grad(u-u_h)\| + \|\bpsi-\bpsi_h\| + \|\MM-\MM_h\| \lesssim h \|f\|
\end{align*}
holds true with a hidden constant that is independent of $t$, $f$, and $\cT$.
\end{theorem}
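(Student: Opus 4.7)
The plan is to invoke the robust quasi-optimality of Theorem~\ref{thm_DPG} and then construct specific interpolants whose $U(t)$-norm and $H^1$-seminorm errors are $\mathcal{O}(h\|f\|)$ uniformly in $t\in(0,1]$, by exploiting the regularity provided by Assumption~\ref{ass1} in combination with commuting projections from the two-dimensional de~Rham complex.

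\textbf{Reduction and scalar/field components.} By \eqref{Cea1} and \eqref{Cea2} it suffices to bound, for carefully chosen $\tilde r_h,\tilde u_h\in P^{1,c}_0(\cT)$ and $\tilde\uu_h\in U_h(t)$, the quantity $\|\grad(r-\tilde r_h)\|+\|\uu-\tilde\uu_h\|_{U(t)}+\|\grad(u-\tilde u_h)\|$ by $h\|f\|$, since $\|\bpsi-\bpsi_h\|+\|\MM-\MM_h\|\le\|\uu-\uu_h\|_{U(t)}$. Assumption~\ref{ass1} gives $\|r\|_2+\|u\|_2\lesssim\|f\|$, so Scott--Zhang interpolation handles the Poisson stages. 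For the four field components of $\uu$, take $L_2$-projections onto $P^0(\cT)$: the bounds $\|\bpsi\|_2\lesssim\|f\|$, $\|\MM\|_1\lesssim\|f\|$ (using $\cC\in C^1$ together with $\MM=-\cC\sGrad\bpsi$), $\|p\|_1+t\|p\|_2\lesssim\|f\|$, and $\|\eeta\|_1=t\|\curl p\|_1\lesssim t\|p\|_2\lesssim\|f\|$ yield $L_2$-errors of order $h\|f\|$; moreover $t\|p-P^0 p\|_*\lesssim h t\|p\|_1\lesssim h\|f\|$.

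\textbf{Trace components.} The delicate part is the trace norms $\|\cdot\|_{\psi,t}$ and $\|\cdot\|_{M,t}$. Since these are infima over liftings, it suffices to exhibit concrete pairs $(\tbpsi,\teeta)\in P^{1,c}_0(\cT)^2\times\mathcal{ND}^0_0(\cT)$ and $(\tMM,\tp)\in\mathcal{RT}^0(\cT)^2\times P^{1,c}(\cT)$ whose errors in $U_1(t)$ and $U_2(t)$ are $\mathcal{O}(h\|f\|)$. In the hard-clamped case $t_*=1$, so all that is not immediately controlled by standard $L_2$ and $H^1$ approximation are
\[
\|\rot(t(\eeta-\teeta)+(\bpsi-\tbpsi))\|\quad\text{and}\quad\|\Div(\MM-\tMM)-\curl(p-\tp)\|.
\]
The idea is to exploit the continuous identities $\rot(t\eeta+\bpsi)=0$ from \eqref{p3} and $\Div\MM-\curl p=\grad r$ from \eqref{p1}, together with the commuting-diagram properties $\rot\Pi_{ND}=P^0\rot$ and $\Div\Pi_{RT}=P^0\Div$, to reduce these residuals to quantities of the form $(I-P^0)\rot(t\eeta+\bpsi)=0$ and $(I-P^0)\grad r$, the latter being $\lesssim h\|r\|_2\lesssim h\|f\|$ by $H^2$-regularity.

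\textbf{Main obstacle.} The principal difficulty is that $P^{1,c}_0(\cT)^2$ and $\mathcal{ND}^0_0(\cT)$ are non-nested subspaces of $\Hrotz(\Omega)$, so the commuting diagram for $\rot$ does not apply to the sum $t\teeta+\tbpsi$ directly. The remedy is to construct $\tbpsi$ via a projection-based (rather than purely vertex-based) quasi-interpolation into $P^{1,c}_0(\cT)^2$ that preserves edge-tangential moments, so that $\rot\tbpsi\in P^0(\cT)$ coincides with $P^0\rot\bpsi$; combined with $\teeta:=\Pi_{ND,0}\eeta$ this forces $\rot(t\teeta+\tbpsi)=P^0\rot(t\eeta+\bpsi)=0$. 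An analogous projection-based construction is carried out for $(\tMM,\tp)$, where the Raviart--Thomas interpolant of $\MM$ and a Scott--Zhang-type interpolant of $p$ must be coupled so that $\Div\tMM-\curl\tp=P^0(\grad r)$. A careful scaling analysis is required throughout to rule out inverse powers of $t$ entering through edge-residual perturbations (in particular in the $\|\eeta-\teeta\|$ and $\|\bpsi-\tbpsi\|_{\sGrad}$ estimates after the coupling). Once these trace-lifting errors are shown to be $\mathcal{O}(h\|f\|)$ uniformly in $t$, assembling via \eqref{Cea1} and \eqref{Cea2} delivers the locking-free bound on $\|\grad(u-u_h)\|+\|\bpsi-\bpsi_h\|+\|\MM-\MM_h\|$.
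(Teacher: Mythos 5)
Your reduction via \eqref{Cea1}--\eqref{Cea2}, the use of Assumption~\ref{ass1}, the choice of Scott--Zhang for $r,u$ and $\Pi_h^0$ for the field variables, and the coupled Raviart--Thomas/Scott--Zhang construction for the lifting of $\tM$ (so that $\Div\NN_h-\curl q_h=\Pi_h^0\grad r$) all match what the paper does. The main divergence concerns how the trace norms are estimated: the paper works with the duality characterization of Lemma~\ref{la_norms}, computing $\dual{\tpsi-\tphi_h}{(\SS,v)}_\cS$ directly and transferring the projection to the test function via $\vdual{(1-\Pi_h^0)\rot\eeta}{v}_\cT=\vdual{\rot\eeta}{(1-\Pi_h^0)v}_\cT$, which is then absorbed by the weight $t\|\curl v\|_\cT$ of $\|\cdot\|_{V_2(\cT,t)}$. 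You instead propose the infimum characterization, which is also legitimate by Lemma~\ref{la_norms}.

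However, the ``main obstacle'' you identify is not one, and the remedy you propose is both unnecessary and in general unavailable. First, a quasi-interpolation into $P^{1,c}_0(\cT)^2$ preserving all edge-tangential moments (equivalently $\rot\tbpsi=\Pi_h^0\rot\bpsi$) cannot be expected to exist: a mesh has roughly three edges per vertex, so the edge-moment constraints on $\tbpsi$ over-determine the $2\,|V_{\rm int}|$ degrees of freedom; in fact $\rot:P^{1,c}_0(\cT)^2\to P^0(\cT)$ is not surjective (this is the well-known instability of the lowest-order Lagrange/piecewise-constant pair), so $\Pi_h^0\rot\bpsi$ need not lie in its range. Second, no such construction is needed. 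Taking the plain choices $\tbpsi=\Pi_h^\grad\bpsi$, $\teeta=\Pi_h^{\rot}\eeta$ and using only the commuting property $\rot\Pi_h^{\rot}\eeta=\Pi_h^0\rot\eeta$ together with the exact relation $t\rot\eeta=-\rot\bpsi$ from \eqref{p3}, one computes
\begin{align*}
\rot\bigl(t(\eeta-\teeta)+(\bpsi-\tbpsi)\bigr)
= t(1-\Pi_h^0)\rot\eeta+\rot(1-\Pi_h^\grad)\bpsi
= \Pi_h^0\rot\bpsi-\rot\Pi_h^\grad\bpsi,
\end{align*}
whose $L_2$-norm is $\lesssim h\|\bpsi\|_2\lesssim h\|f\|$ by Assumption~\ref{ass1}. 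This is exactly the cancellation that the paper realizes on the dual side; it makes the infimum estimate of $\|\tpsi-\tphi_h\|_{\psi,t}$ go through without any exotic interpolant. Beware that attempting to bound $t\|(1-\Pi_h^0)\rot\eeta\|$ term by term (without the rewriting above) only gives $\mathcal O(t\|f\|)$, since $\|\rot\eeta\|_1$ is not controlled by Assumption~\ref{ass1}; the algebraic identity (or equivalently the paper's duality trick) is essential. With this correction your route is sound, but as written the proof rests on an unproven and most likely false existence claim.
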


\subsection{Proof of Theorem~\ref{thm_DPG_lf}}

We will employ some canonical approximation operators, recalled in the following together with
their approximation properties.

\begin{itemize}
\item
$\Pi_h^0:\; L_2(\Omega)\to P^0(\cT)$ is the $L_2(\Omega)$-orthogonal projection onto piecewise
constants with approximation property
\begin{align*}
   \|(1-\Pi_h^0)v\| \lesssim h\|v\|_1.
\end{align*}
We use the same notation $\Pi_h^0$ as component-wise application to vector or tensor functions. 

\item
$\Pi_h^\grad:\; L_2(\Omega) \to P^{1,c}_0(\cT))$ is a quasi-interpolation operator
(Scott--Zhang) which satisfies
\begin{align*}
   &\|\Pi_h^\grad v\| \lesssim \|v\|, \quad
   \|\Pi_h^\grad v\|_1 \lesssim \|v\|_1, \quad
   h^{-1}\|(1-\Pi_h^\grad) v\| + \|(1-\Pi_h^\grad) v\|_1 \lesssim h\|v\|_2.
\end{align*}
We use the same notation $\Pi_h^\grad$ as component-wise application to vector functions.

\item
$\Pi_h^\div:\;\Hdiv{\Omega}\to\mathcal{RT}^0(\cT)$ is the interpolation operator
introduced by Ern \emph{et al.} \cite[Section~3.1]{ErnGSV_ELG}. It satisfies
\begin{align*}
   &\|\Pi_h^\div\bphi\| \lesssim \|\bphi\| + h \|(1-\Pi_h^0)\div\bphi\|, \quad
   \|(1-\Pi_h^\div)\bphi\| \lesssim h \|\bphi\|_1, \\
   &\div(1-\Pi_h^\div)\bphi = (1-\Pi_h^0)\div\bphi.
\end{align*}
We use the same notation $\Pi_h^\div$ as row-wise application to tensor functions.

\item
$\Pi_h^{\rot}:\; \Hrotz\Omega\to\mathcal{ND}^0_0(\cT)$ is the rotated version of $\Pi_h^\div$,
mapping to the N\'ed\'elec space of rotated Raviart--Thomas elements, and including
the boundary condition of zero tangential components on $\G{}$.
Operator $\Pi_h^{\rot}$ has the same approximation properties as $\Pi_h^\div$,
replacing $\div$ with $\rot$.
\end{itemize}

Now, in order to prove Theorem~\ref{thm_DPG_lf}, it suffices to bound the right-hand side from
\eqref{Cea2} for appropriately selected discrete functions
$\tilde r_h,\tilde u_h\in P^{1,c}_0(\cT)$,
$\tilde\uu_h=(\bphi_h,\bxi_h,\NN_h,q_h,\tphi_h,\tN_h)\in U_h(t)$.

\begin{enumerate}
\item Regularity.
Recall the regularity properties from Assumption~\ref{ass1} where we include the replacement
of $r$ with $r_h$ in \eqref{p1} and \eqref{S2h}, and select $p$ with $\vdual{p}{1}=0$.
Since $\cC$ is $C^1$ by assumption, relations $\MM = -\cC\sGrad\bpsi$ and $\eeta=t\curl p$ imply that
\begin{align*}
    \|r\|_2 + \|u\|_2 + \|\bpsi\|_2 + \|p\|_1 + t\|p\|_2 + \|\MM\|_1 + \|\eeta\|_1 &\lesssim \|f\|.
  \end{align*}
This regularity will be used throughout in the following.

\item Approximation of $r$ and $u$.
We define $\tilde r_h:=\Pi_h^\grad r$ and $\tilde u_h=\Pi_h^\grad u$. This gives
\[
   \|\grad(r-\tilde r_h)\| + \|\grad(u-\tilde u_h)\| \lesssim h \|f\|.
\]

\item Approximation of the field variables from $\uu$. We choose
\begin{align*}
   \bphi_h := \Pi_h^0\bpsi, \quad \bxi_h := \Pi_h^0\eeta, \quad
   \NN_h := \Pi_h^0\MM, \quad q_h := \Pi_h^0p.
\end{align*}
The approximation properties of $\Pi_h^0$ show that
\begin{align*}
    \|\bpsi-\bphi_h\| + \|\eeta-\bxi_h\| + \|\MM-\NN_h\| + \|p-q_h\|
    &\lesssim 
    h\big( \|\bpsi\|_1 + \|\eeta\|_1 + \|\MM\|_1 + \|p\|_1  \big)
    \lesssim h\|f\|.
\end{align*}

\item Approximation of trace $\tpsi$. We select
\begin{align*}
   \tphi_h := \tracepsi{}(\Pi_h^\grad\bpsi,\Pi_h^{\rot}\eeta).
\end{align*}
By definition of trace operator $\tracepsi{}$ we find that, for $(\SS,v)\in V_2(\cT)$,
\begin{align*}
   \dual{\tpsi-\tphi_h}{(\SS,v)}_\cS
   &= \vdual{(1-\Pi_h^\grad)\bpsi}{\Div\SS-\curl v}_\cT - \vdual{(1-\Pi_h^{\rot})\eeta}{t\curl v}_\cT
   \\
   &\qquad+
   \vdual{\sGrad(1-\Pi_h^\grad)\bpsi}{\SS}_\cT +
   \vdual{\rot(t(1-\Pi_h^{\rot})\eeta+(1-\Pi_h^\grad)\bpsi)}{v}_\cT.
\end{align*}
Bounding the terms involving $(1-\Pi_h^\grad)\bpsi$ shows that
  \begin{align*}
    &|\vdual{(1-\Pi_h^\grad)\bpsi}{\Div\SS-\curl v}_\cT|
    + |\vdual{\sGrad(1-\Pi_h^\grad)\bpsi}{\SS}_\cT|
    + |\vdual{\rot(1-\Pi_h^\grad)\bpsi}{v}_\cT|
    \\
    &\qquad\lesssim
    \|(1-\Pi_h^\grad)\bpsi\|\,\|\Div\SS-\curl v\|
    + \|\sGrad(1-\Pi_h^\grad)\bpsi\|\,\|\SS\|
    + \|\rot(1-\Pi_h^\grad)\bpsi\|\,\|v\|_*
    \\
    &\qquad
    \lesssim h\|\bpsi\|_2 \|(\SS,v)\|_{V_2(\cT,t)}
    \lesssim h\|f\|\,\|(\SS,v)\|_{V_2(\cT,t)}.
\end{align*}
For the remaining terms we use the commutativity property
$\rot(1-\Pi_h^{\rot})\eeta = (1-\Pi_h^0)\rot\eeta$. This yields
\begin{align*}
   &|\vdual{(1-\Pi_h^{\rot})\eeta}{t\curl v}_\cT| + t\,|\vdual{\rot (1-\Pi_h^{\rot})\eeta}{v}_\cT| 
   \\
   &\qquad = |\vdual{(1-\Pi_h^{\rot})\eeta}{t\curl v}_\cT| + t\,|\vdual{(1-\Pi_h^0)\rot\eeta}{v}_\cT|
   \\
   &\qquad = |\vdual{(1-\Pi_h^{\rot})\eeta}{t\curl v}_\cT| + t\,|\vdual{\rot\eeta}{(1-\Pi_h^0)v}_\cT| 
   \\
   &\qquad \lesssim h\|\eeta\|_1\, t\,\|\curl v\|_{\cT} + h \|\eeta\|_1\, t\,\|\grad v\|_{\cT} 
   \lesssim
   h \|f\| \,\|(\SS,v)\|_{V_2(\cT,t)}.
\end{align*}
Here, we also used that $\|\grad v\|_\cT = \|\curl v\|_\cT$.

\item Approximation of trace $\tM$.
We select $q_h := \Pi_h^\grad p$, $\NN_h := \Pi_h^\div(\MM - R(p-q_h))$ with
\begin{align*}
   R = \begin{pmatrix} 0 & 1 \\ -1 & 0\end{pmatrix}
\end{align*}
so that $\Div R(\cdot) = \curl(\cdot)$.
Since $p-q_h\in H^1(\Omega)$ we have that $R(p-q_h)\in\HDiv\Omega$. Then we define
\begin{align*}
   \tN_h := \traceM{}(\NN_h,q_h).
\end{align*}
By definition of trace operator $\traceM{}$ we find that, for $(\bchi,\brho)\in V_1(\cT)$,
\begin{align*}
   \dual{\tM-\tN_h}{(\bchi,\brho)}_\cS
   &= \vdual{\Div(\MM-\NN_h)-\curl (p-q_h)}{\bchi}_\cT - \vdual{t\curl (p-q_h)}{\brho}_\cT
   \\
   &\quad +\vdual{\MM-\NN_h}{\sGrad\bchi}_\cT + \vdual{p-q_h}{\rot(t\brho+\bchi)}_\cT.
\end{align*}
We bound
\begin{align*}
   &|\vdual{t\curl (p-q_h)}{\brho}_\cT| + |\vdual{p-q_h}{\rot(t\brho+\bchi)}_\cT| 
   \\
   &\qquad\lesssim ht\|p\|_2 \|\brho\|
   + th^2\|p\|_2\, t^{-1}\|\rot(t\brho+\bchi)\|_{\cT}
   \lesssim h\|f\|\,\|(\bchi,\brho)\|_{V_1(\cT,t)}.
\end{align*}
The term $\vdual{\MM-\NN_h}{\sGrad\bchi}_\cT$ is straightforward to bound.
Using the properties of $\Pi_h^\div$, $\Pi_h^\grad$, and noting that
$(1-\Pi_h^0)\Div Rq_h = 0$ since $q_h$ is piecewise affine, we obtain
\begin{align*}
   |\vdual{\MM-\NN_h}{\sGrad\bchi}_\cT| &\lesssim \|\MM-\NN_h\|\,\|\sGrad\bchi\|_{\cT} 
   \lesssim (\|\MM-\Pi_h^\div\MM\| + \|\Pi_h^\div R(p-q_h)\|)\|(\bchi,\brho)\|_{V_1(\cT,t)}
   \\
   &\lesssim \big( h\|\MM\|_1 + \|p-q_h\| + h\|(1-\Pi_h^0)\Div R(p-q_h)\| \big)
             \|(\bchi,\brho)\|_{V_1(\cT,t)}
   \\
   &\lesssim h\big( \|\MM\|_1 + \|p\|_1 + \|(1-\Pi_h^0)\curl p\| \big)
             \|(\bchi,\brho)\|_{V_1(\cT,t)}
   \\
   &\leq h\big( \|\MM\|_1 + \|p\|_1 + \|\curl p\| \big)
         \|(\bchi,\brho)\|_{V_1(\cT,t)}
   \lesssim h\|f\|\,\|(\bchi,\brho)\|_{V_1(\cT,t)}.
\end{align*}
For the final term of the approximation of $\tM$ we note that, by construction of $\NN_h$,
\begin{align*}
   \Div(\MM-\NN_h)-\curl (p-q_h) &= \Div(1-\Pi_h^\div)\MM - \Div (1-\Pi_h^\div)R(p-q_h) 
   \\
   &= \Div(1-\Pi_h^\div)(\MM-R(p-q_h)) = (1-\Pi_h^0)\Div(\MM-R(p-q_h)) 
   \\
   &= (1-\Pi_h^0)\Div(\MM-R p) = (1-\Pi_h^0)(\Div\MM-\curl p).
\end{align*}
Recall from~\eqref{p1} that $\Div\MM-\curl p= \grad r$ with $r\in H_0^1(\Omega)$
being the solution to Poisson problem~\eqref{S1}. Therefore,
\begin{align*}
   |\vdual{\Div(\MM-\NN_h)-\curl (p-q_h)}{\bchi}_\cT|
   &= |\vdual{(1-\Pi_h^0)\grad r}{\bchi}| = |\vdual{(1-\Pi_h^0)\grad r}{(1-\Pi_h^0)\bchi}|
   \\
   &\lesssim h^2\|r\|_2 \|\bchi\|_1
   \lesssim h^2\|f\|\,\|(\bchi,\brho)\|_{V_1(\cT,t)}.
\end{align*}
Note that this term is of higher order.
\end{enumerate}

Collecting all the estimates this concludes the proof of Theorem~\ref{thm_DPG_lf}.

\section{Numerical experiments} \label{sec_num}
\def\errU{\|u-u_h\|_1}
\def\errPsi{\|\bpsi-\bpsi_h\|}
\def\errM{\|\MM-\MM_h\|}
\def\estTOT{\eta}
\def\estStageOne{\eta_1}
\def\estStageTwo{\eta_2}
\def\estStageThree{\eta_3}

Throughout we consider the scaled Reissner--Mindlin model \eqref{RM3} with identity tensor $\cC$.
We study four problems and their numerical solution by the three-stage scheme
\eqref{VFh} where discrete space $U_h$ is selected as in \eqref{Uh}, with boundary conditions
as needed. As noted in Remark~\ref{rem_test}(i), trial-to-test operator $\ttt$ from \eqref{ttt}
has to be approximated. For all the examples we do this by solving \eqref{ttt} in the piecewise
polynomial (degree $3$) subspace of $V(\cT)$, rather than in $V(\cT)$.
In three of the four cases, the exact solutions are known and we report on the approximation
errors $\|u-u_h\|_1$, $\|\bpsi-\bpsi_h\|$, and $\|\MM-\MM_h\|$
(recall that $\|\cdot\|_1$ refers to the standard $H^1(\Omega)$-norm).
Specifically, Problem 1 is that of a polynomial solution with hard-clamped boundary
and has been used in several publications before,
Problem 2 is derived from the Kirchhoff solution and has hard simple support,
for Problem 3 we consider a non-convex polygon with a combination of clamped and free boundary pieces,
and Problem 4 stems from Di Pietro and Droniou \cite{DiPietroD_DMR} and has a $t$-dependent regularity
(we use the corresponding non-homogeneous hard-clamped condition).

Problem 1 satisfies our conditions from Section~\ref{sec_lf} that guarantee that our
method is locking free.
We don't have an exact solution of Problem 3. In that case we plot the a posteriori error estimators.
The numerical results indicate the absence of locking for all the problems studied here.

\subsection{Example with polynomial solution}\label{sec:examplePolSol}

\begin{figure}
  \begin{center}
     \includegraphics[width=0.49\textwidth]{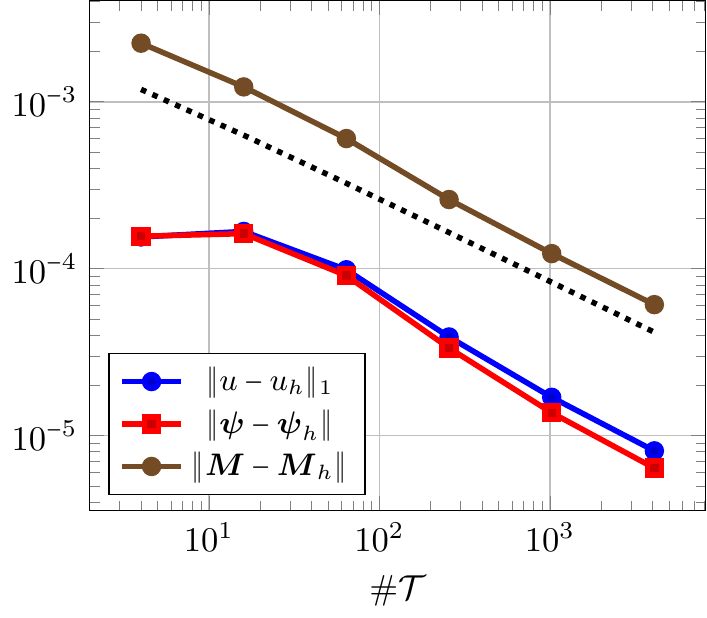}
     \includegraphics[width=0.49\textwidth]{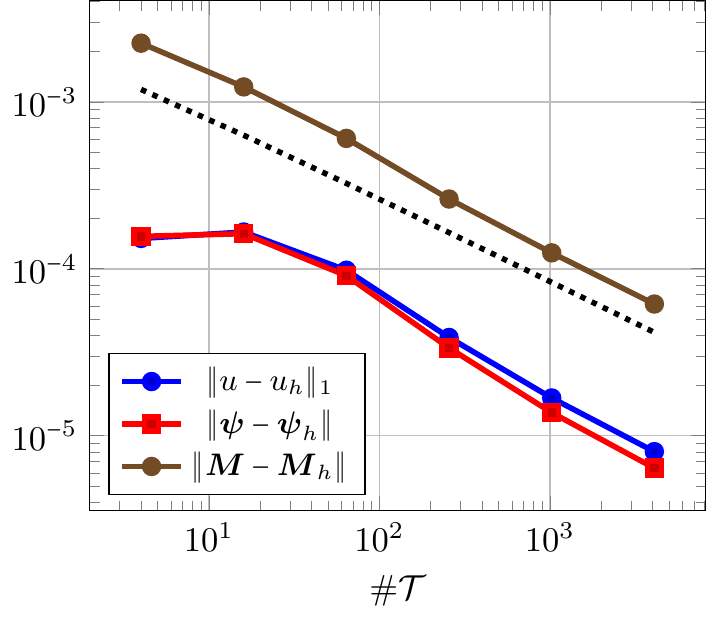}
  \end{center}
  \caption{Errors for the polynomial solution from Section~\ref{sec:examplePolSol}
           with $t=10^{-2}$ (left) and $t=10^{-4}$ (right).
           Dashed black lines indicate $\mathcal{O}(\#\cT^{-1/2})$.}
  \label{fig:polSol}
\end{figure}

We consider domain $\Omega = (0,1)^2$ and a manufactured polynomial solution. Starting with
\begin{align*}
  \bpsi(x,y) = \begin{pmatrix}
    y^3(y-1)^3x^2(x-1)^2(2x-1) \\
    x^3(x-1)^3y^2(y-1)^2(2y-1)
  \end{pmatrix},
\end{align*}
we calculate $\MM = -\sGrad\bpsi$, and $u\in H_0^1(\Omega)$ can be obtained from relation
\(
  \nabla u = t^2\div\MM + \bpsi.
\)
This solution satisfies the hard-clamped boundary condition. 

Figure~\ref{fig:polSol} shows the errors for plate thickness parameter
$t=10^{-2}$ (left plot) and $t=10^{-4}$ (right plot), using a sequence of uniformly refined meshes. 
As shown in Section~\ref{sec_lf}, our method is locking free in this case.
The numerical results confirm our result. 



\subsection{Example with hard simple support}\label{sec:exampleHSS}

\begin{figure}
  \begin{center}
     \includegraphics[width=0.49\textwidth]{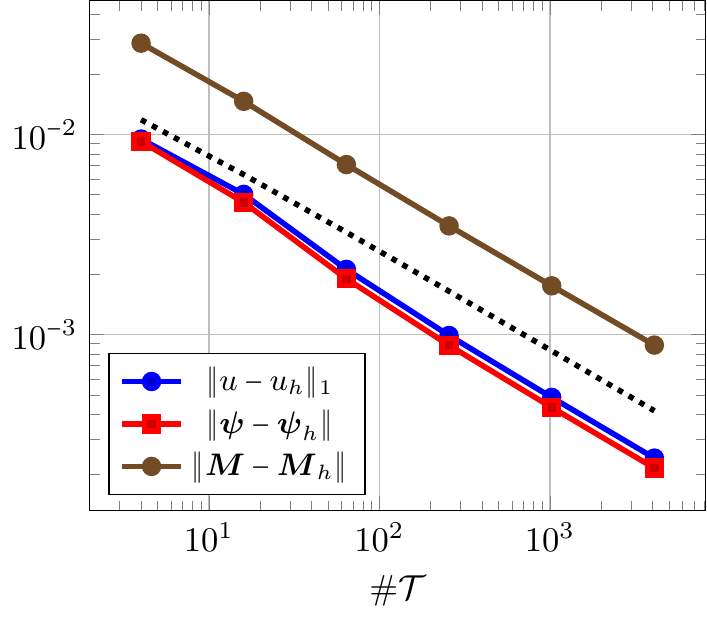}
     \includegraphics[width=0.49\textwidth]{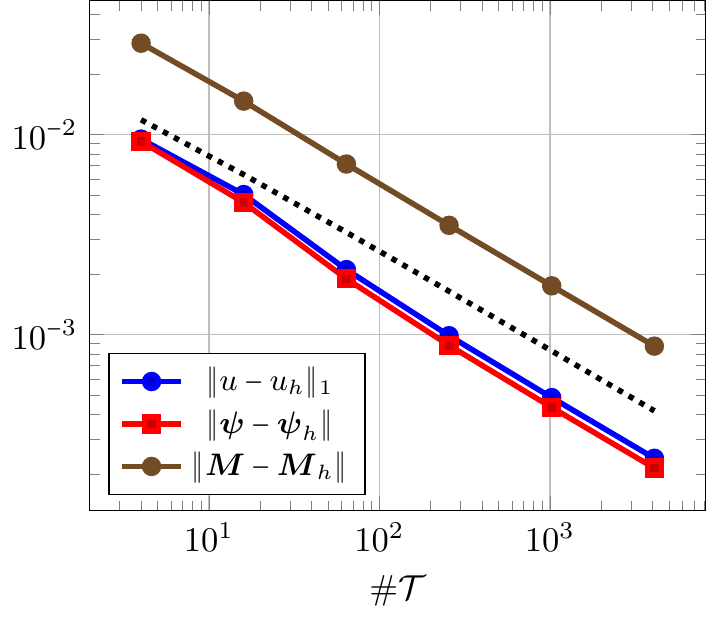}
  \end{center}
  \caption{Errors for example with Kirchhoff solution and hard simple support
           from Section~\ref{sec:exampleHSS},
           with $t=10^{-2}$ (left) and $t=10^{-4}$ (right).
           Dashed black lines indicate $\mathcal{O}(\#\cT^{-1/2})$.}
  \label{fig:HSS}
\end{figure}

In this example we select $\Omega = (0,1)^2$. Let $u_K$ be the Kirchhoff solution defined by
\begin{align*}
  \Delta^2 u_K &= 1 \quad\text{in }\Omega, \quad
  u_K = \Delta u_K =0 \quad\text{on }\Gamma.
\end{align*}
It can be represented as the Fourier series
\begin{align*}
  u_K(x,y) = \sum_{n,m=1}^\infty
             \frac{4 (1-\cos(m\pi))(1-\cos(n\pi))}{\pi^6 mn(m^2+n^2)^2}\sin(m\pi x)\sin(n\pi y).
\end{align*}
Defining
\begin{align*}
  u = u_K-t^2\Delta u_K, \quad \bpsi = \nabla u_K, \quad \MM = -\sGrad\bpsi, \quad \bq = -\nabla\Delta u_K
\end{align*}
one verifies that this gives a solution of \eqref{RM3}
with $\cC=\mathrm{id}$, $f = -\div\bq = 1$, and hard simple support.
To calculate approximation errors, we replace $u_K$ by its first $200^2$ Fourier terms. 

Figure~\ref{fig:HSS} shows the errors for $t=10^{-2}$ (left plot) and $t=10^{-4}$ (right plot),
using a sequence of uniformly refined meshes.
We observe that our results appear to be locking free also for this case with hard simple support.



\subsection{Example with singular solution}\label{sec:exampleLshape}

\begin{figure}
  \begin{center}
     \includegraphics[width=0.49\textwidth]{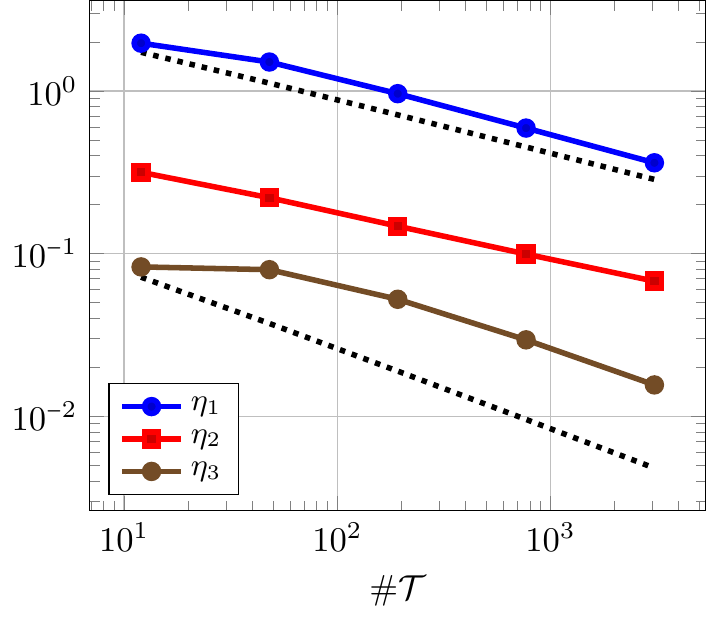}
     \includegraphics[width=0.49\textwidth]{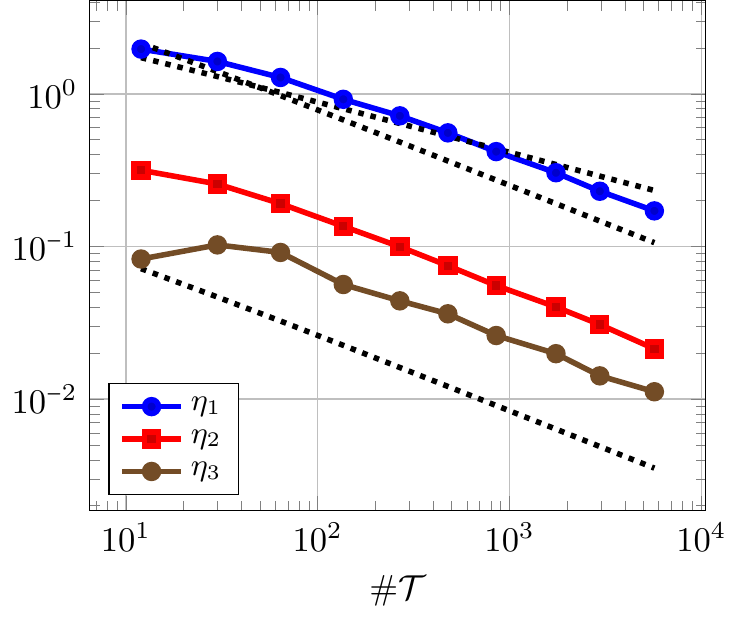}
  \end{center}
  \caption{Errors for example with singular solution from Section~\ref{sec:exampleHSS} and $t=10^{-3}$,
           uniform (left) and adaptively refined meshes (right).
           Dashed black lines indicate $\mathcal{O}(\#\cT^{-1/3})$ or $\mathcal{O}(\#\cT^{-1/2})$.}
  \label{fig:Lshape}
\end{figure}

\begin{figure}
  \begin{center} \includegraphics[width=0.7\textwidth]{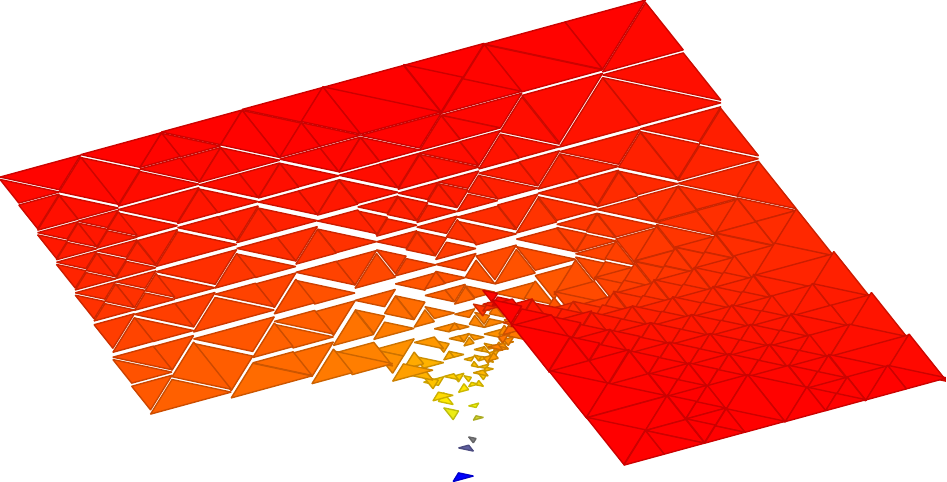} \end{center}
  \caption{Approximation of bending moment $\MM_{22}$ on adaptively refined mesh ($\#\cT = 477$)
           from the example with singular solution in Section~\ref{sec:exampleLshape}.}
  \label{fig:LshapeMoments}
\end{figure}

We consider the L-shaped domain $\Omega = (-1,1)^2\setminus[-1,0]^2$ and constant load $f=1$. 
The edges adjacent to the origin are hard clamped and all the others are free.
The exact solution to this problem is unknown, and due to the incoming corner we expect
the solution to have a corner singularity of reduced regularity.
Therefore, our method should exhibit a reduced order of convergence when using quasi-uniform meshes.
It is natural to consider adaptive mesh refinements to regain the optimal convergence rate.
To do so, we use a simple a posteriori error estimator composed of an estimator
for each of the three stages, written as
\[
  \estTOT^2 := \estStageOne^2+\estStageTwo^2+\estStageThree^2.
\]
Estimator $\estStageTwo$ denotes the built-in error estimator of the DPG method,
cf.~\cite{CarstensenDG_14_PEC} for details, and $\estStageOne$, $\estStageThree$
are weighted residual Poisson estimators, see, e.g.,~\cite[Chapter~2]{AinsworthO_00_AEE}.
All these estimators are localizable with respect to the elements of the mesh,
$\eta_j^2 = \sum_{T\in\cT}\eta_j(T)^2$, and thus can be used to steer a standard adaptive algorithm.
We use the bulk criterion
\begin{align*}
  \frac12 \eta^2 \leq \sum_{T\in\mathcal{M}} \eta^2(T)
\end{align*}
to mark elements and  newest-vertex bisection for mesh refinement.
Here, $\mathcal{M}$ denotes a (minimal) set of elements marked for refinement.

Figure~\ref{fig:Lshape} shows the three error estimators on a sequence of uniformly
(left plot) and adaptively refined meshes (right plot).
As expected, for uniform meshes we observe a reduced convergence rate whereas the optimal rate
is recovered through adaptivity. The presented results are for $t=10^{-3}$ and,
interestingly, we do not observe any locking effect. 


Figure~\ref{fig:LshapeMoments} shows component $\MM_{22}$ of the bending moment approximation
on an adaptively refined mesh. It seems to behave singularly at the incoming corner,
as do the other components (not shown).



\subsection{Example with $t$-dependent behavior}\label{sec:exampleDiPietroD}

\begin{figure}
  \begin{center}
     \includegraphics[width=0.49\textwidth]{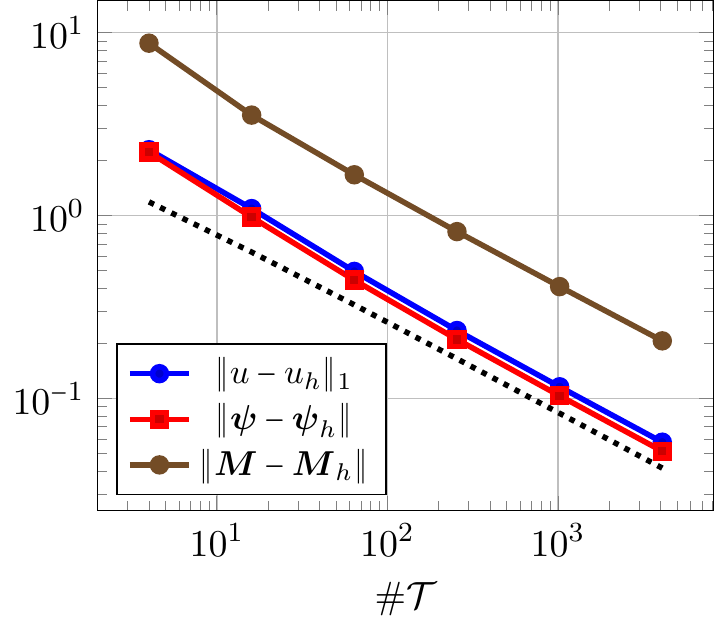}
     \includegraphics[width=0.49\textwidth]{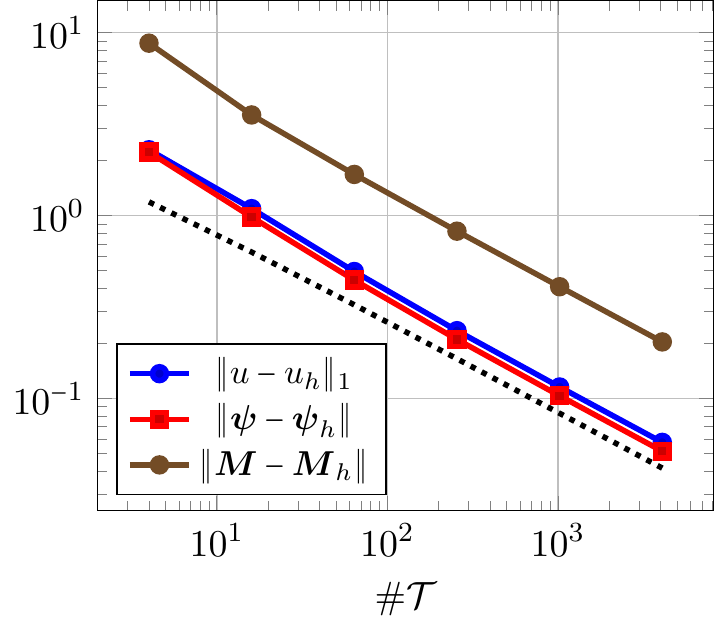}
  \end{center}
  \caption{Errors for example with $t$-dependent solution from Section~\ref{sec:exampleDiPietroD}
           with $t=10^{-2}$ (left) and $t=10^{-4}$ (right).
           Dashed black lines indicate $\mathcal{O}(\#\cT^{-1/2})$.}
  \label{fig:DiPietroD}
\end{figure}

We consider the manufactured solution from Di Pietro and Droniou~\cite{DiPietroD_DMR},
with domain $\Omega=(0,1)^2$.
We only note that in this case the $H^1(\Omega)$-norm of the shear force depends on $t$
and blows up like $t^{-1/2}$, whereas $\div\bq=-f$ is independent of $t$.
We refer to~\cite[Section~5.2.1]{DiPietroD_DMR} for details on the precise construction,
cf.~also \cite{ArnoldF_96_AAB}.
We impose hard-clamped boundary conditions and note that they are non-homogeneous. 

Figure~\ref{fig:DiPietroD} shows the errors for $t=10^{-2}$ (left plot) and $t=10^{-4}$ (right plot),
using a sequence of uniformly refined meshes. Again, we observe that our results are locking free.






\end{document}